\definecolor{mygreen}{rgb}{0,0.6,0}
\definecolor{myyellow}{rgb}{1,1,0.95}
\definecolor{mygray}{rgb}{0.5,0.5,0.5}
\definecolor{mymauve}{rgb}{0.58,0,0.82}
\tiny\color{mygray}, 
\def\@tocline#1#2#3#4#5#6#7{\relax
  \ifnum #1>\c@tocdepth 
  \else
    \par \addpenalty\@secpenalty\addvspace{#2}%
    \begingroup \hyphenpenalty\@M
    \@ifempty{#4}{%
      \@tempdima\csname r@tocindent\number#1\endcsname\relax
    }{%
      \@tempdima#4\relax
    }%
    \parindent\z@ \leftskip#3\relax \advance\leftskip\@tempdima\relax
    \rightskip\@pnumwidth plus4em \parfillskip-\@pnumwidth
    #5\leavevmode\hskip-\@tempdima
      \ifcase #1
       \or\or \hskip 1em \or \hskip 2em \else \hskip 3em \fi%
      #6\nobreak\relax
    \hfill\hbox to\@pnumwidth{\@tocpagenum{#7}}\par
    \nobreak
    \endgroup
  \fi}
\newcommand{\sqed}{\hfill{$\square$}}
\newcommand\cyr{%
\renewcommand\rmdefault{wncyr}%
\renewcommand\sfdefault{wncyss}%
\renewcommand\encodingdefault{OT2}%
\normalfont\selectfont}
\DeclareTextFontCommand{\textcyr}{\cyr}
\DeclareMathOperator{\card}{card}
\DeclareMathOperator{\Spec}{Spec}
\DeclareMathOperator{\Tor}{Tor}
\newcommand{\C}{\mathbb{C}}
\newcommand{\N}{\mathbb{N}}
\newcommand{\IP}{\mathbb{P}}
\newcommand{\Q}{\mathbb{Q}}
\newcommand{\R}{\mathbb{R}}
\newcommand{\Z}{\mathbb{Z}}
\newcommand{\SO}{\mathscr O}
\newcommand{\setmi}{\smallsetminus}
\newtheorem{thm}{Theorem}[section]
\newtheorem{cla}[thm]{Claim}
\newtheorem{hyp}[thm]{Hypothesis}
\newtheorem{lem}[thm]{Lemma}
\newtheorem{prop}[thm]{Proposition}
\theoremstyle{definition}
\newtheorem{defn}[thm]{Definition}
\newtheorem{rem}[thm]{Remark}
\newcommand{\charef}[1]{Chapter \ref{#1}}
\newcommand{\defnref}[1]{Definition \ref{#1}}
\newcommand{\equref}[1]{Equation (\ref{#1})}
\newcommand{\lemref}[1]{Lemma \ref{#1}}
\newcommand{\propref}[1]{Proposition \ref{#1}}
\newcommand{\thmref}[1]{Theorem \ref{#1}}
\renewcommand{\to}{\xrightarrow{\quad}}
\newcommand{\apc}{\rightarrow}
\renewcommand{\mapsto}{\longmapsto}
\tikzset{double line with arrow/.style args={#1,#2}{decorate,decoration={markings,%
mark=at position 0 with {\coordinate (ta-base-1) at (0,1pt);
\coordinate (ta-base-2) at (0,-1pt);},
mark=at position 1 with {\draw[#1] (ta-base-1) -- (0,1pt);
\draw[#2] (ta-base-2) -- (0,-1pt);
}}}}
\newcommand\lhexbr{%
  \mathopen{%
    \mspace{1mu}%
    \text{\lhexbra}%
    \mspace{1mu}%
  }%
}
\newcommand\rhexbr{%
  \mathopen{%
    \mspace{1mu}%
    \text{\rhexbra}%
    \mspace{1mu}%
  }%
}
\newcommand\lhexbra{%
    \tikz[line cap=round,x=1ex,y=1ex,line width=0.3pt,baseline={([yshift={-1.6ex}]current bounding box.north)}]
    {\draw (0.4,2) -- (0.0,1.5) -- (0.0,0.5) -- (0.4,0);}%
}
\newcommand\rhexbra{%
    \tikz[line cap=round,x=1ex,y=1ex,line width=0.3pt,baseline={([yshift={-1.6ex}]current bounding box.north)}]
    {\draw (-0.4,2) -- (0.0,1.5) -- (0.0,0.5) -- (-0.4,0);}%
}
\newcommand{\imof}[1]{\lhexbr#1\rhexbr}
\date{December 10, 2019.}
\title{On the Smoothness of Severi Variety}%
\author{Xiao Yang}%
\begin{document}
\numberwithin{equation}{section}

\begin{abstract}
In this paper, we aim at giving a rigorous proof of the statements on the smoothness and the dimension of Severi varieties where there are gaps in the proofs in some standard literature. The method is a mixture of algebraic and analytic methods.
\end{abstract}

\maketitle

\vspace{7mm}
\tableofcontents

\newpage

\section{Introduction}
\label{cha:0}

Throughout, the base field is $\C$. A variety is a quasiprojective, integral and separated scheme of finite type over $\Spec(\C)$. A curve is a projective one-dimensional scheme. Hence, all curves are irreducible and reduced.

In the Appendix F in the lecture script \cite{SEV21}, F. Severi tried to establish the theory of \textit{parametrization space} of degree $d$ plane curves in which each point is associated to a principal homogeneous ideal of degree $d$ in $\C[X,Y,Z]$. Two principal homogeneous ideals are the same if and only if the generators are associated. Explicitly, $$\left(\sum_{\substack{i+j+k=d\\i,j,k\in\N}}c_{ijk}X^iY^jZ^k\right)\mapsto[c_{ijk}]\in\IP^{L_d},$$
where $L_d=\frac{d(d+3)}{2}$. He then specified a closed subset $V_{d,g}$ in the parametrization space correspond to the Zariski closure of the collection of all plane curves of degree $d$ with \textit{geometric genus} $g$. The statement that he tried to prove was $V_{d,g}$ is an algebraic variety of dimension $3d-1+g$ which is nowadays known as the \textit{Severi variety}.

Despite the direct and natural definition of the Severi varieties, the statement is, however, not at all easy to show. This is one of the typical mathematical problem which seems innocuous but actually has led to a lot of dilemmas. He considered the subset $D_{d,n}$ of $V_{d,g}$ which consists of plane curves with only $n$ nodes where $n$ is linked to $g$ by the degree-genus formula. We also refer $D_{d,n}$ as Severi variety. He claimed that $D_{d,n}$ is open, dense in $V_{d,g}$, and smooth of dimension $3d+g-1$. Effectively, he claimed that $D_{d,n}$ was connected. However, his proof did not convince the public. It had remained unproven until the paper \cite{HAR86} was published where J. Harris continued the proof idea of Severi. But this will not be concerned as the goal of this paper although it is interesting and highly nontrivial.

This author of this paper was motivated by the literature \cite{HAM98} P24 where the argument for the smoothness and the dimension of $D_{d,n}$ is not clear or cogent, either. Possibly, a crucial step is missing.  The proof of smoothness and the dimension of $D_{d,n}$ is actually a step stone of the claim that Severi wanted, hence, crucial. Therefore, the aim of this paper is to explain where the gaps in \cite{HAM98} are and to fill them. 

This paper is arranged as allowing: in \charef{cha:1}, we will review some notions and compute the geometric genus of nodal plane curves; in \charef{cha:2}, we will define some sets related to the problem, prove that they are constructible, address the smoothness of these sets, and eventually compute the dimension; in \charef{cha:3}, we will discuss what is missing in the literature \cite{HAM98} and try to fill the gaps.

\section{Genus of Nodal Plane Curves}
\label{cha:1}

\begin{defn}
A \textit{numerical polynomial} is a polynomial $p\in\Q[T]$ such that for all $n\in\Z$, $p(n)\in\Z$.
\end{defn}

In \cite{HAR77} the definition is different but actually the same because:

\begin{lem}
$f\in\Q[T]$ is numerical if and only if there exists $N\in\N$ such that  
$f(n)\in\Z$ for all $n\in\N$ with $N\leqslant n\leqslant N+\deg(f)$.
\end{lem}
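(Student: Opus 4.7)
The forward implication is immediate: if $f$ is numerical, then $f(n)\in\Z$ for every integer $n$, in particular on the specified window of length $\deg(f)+1$. All the content is in the converse, and the plan is to prove it by induction on $d:=\deg(f)$, using the finite difference operator $\Delta$ defined by $(\Delta f)(T):=f(T+1)-f(T)$, which lowers the degree by exactly one.

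The base case $d=0$ is trivial: then $f$ is a rational constant, and the hypothesis that $f(N)\in\Z$ forces this constant to be an integer, so $f(n)\in\Z$ for every $n\in\Z$. For the inductive step, assume the lemma for polynomials of degree less than $d$, and let $f\in\Q[T]$ of degree $d$ satisfy $f(n)\in\Z$ for $N\leqslant n\leqslant N+d$. Set $g(T):=f(T+1)-f(T)\in\Q[T]$; this polynomial has degree $d-1$. The hypothesis gives
\[
g(n)=f(n+1)-f(n)\in\Z \qquad \text{for every } n \text{ with } N\leqslant n\leqslant N+d-1,
\]
which is exactly $\deg(g)+1$ consecutive integer values. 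By the induction hypothesis, $g$ is numerical, i.e.\ $g(m)\in\Z$ for every $m\in\Z$.

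It remains to propagate the integrality of $f$ from the window $[N,N+d]$ to all of $\Z$. For $n\geqslant N$ one has the telescoping identity
\[
f(n)=f(N)+\sum_{k=N}^{n-1}g(k),
\]
which lies in $\Z$ because $f(N)\in\Z$ and $g$ takes integer values on integers; for $n<N$ the analogous identity $f(n)=f(N)-\sum_{k=n}^{N-1}g(k)$ gives the same conclusion. Hence $f(n)\in\Z$ for all $n\in\Z$, and $f$ is numerical.

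There is no real obstacle: the only point requiring care is to verify that the induction hypothesis actually applies, namely that one indeed has $\deg(g)+1=d$ consecutive integer values of $g$ starting at $N$, which matches the shape of the hypothesis for $g$. Once this is in place, the conclusion for $f$ follows formally from the two-sided telescoping argument. (An equivalent, more compact formulation of the same proof would be to expand $f$ in the binomial basis $\binom{T-N}{k}$ via Newton's forward-difference formula
\[
f(T)=\sum_{k=0}^{d}(\Delta^{k}f)(N)\binom{T-N}{k},
\]
observe that each $(\Delta^{k}f)(N)$ is an integer linear combination of $f(N),\dots,f(N+k)$, and use that $\binom{T-N}{k}$ is itself numerical; but the inductive presentation above is shorter and self-contained.)
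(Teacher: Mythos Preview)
Your proof is correct and follows essentially the same approach as the paper: both argue by induction on $\deg(f)$, apply the induction hypothesis to the difference $g(T)=f(T+1)-f(T)$, and then propagate integrality of $f$ from a single known value using $f(n\pm1)=f(n)\pm g(\cdot)$. Your version is slightly more explicit (writing out the telescoping sums and noting the alternative via the binomial basis), but the argument is the same.
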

\begin{proof}
A numerical polynomial must satisfy the condition.\sqed

For the other direction, prove by induction on the degree of $f$. When degree is 0 it is clear. Set $g(n):=f(n+1)-f(n)$. Then $\deg(g)\leqslant\deg(f)-1$ and $g(n)\in\Z$ for all integers $N\leqslant n\leqslant N+\deg(f)-1$. Thus $g(n)\in\Z$ for all $n\in\Z$. Hence, once $f(n)\in\Z$, we will have $f(n\pm 1)\in\Z$. But $f(N)\in\Z$, so $f(n)\in\Z$ for all $n\in\Z$.
\end{proof}

\begin{defn}
Let $R:=\C[X_0,...,X_n]$ be the polynomial ring over a field $\C$. Given a finitely generated $R$-module $M$, the \textit{Hilbert polynomial} of $M$ is a numerical polynomial $h_M$ such that $h_M(n)=\dim_{\C}(M_n)$ for all $n\in\Z$. The \textit{Hilbert polynomial} of a projective variety $X$ over $\Spec(\C)$ is a numerical polynomial $h_X$ such that $h_X(n)=\dim_{\C}(S(X)_n)$ for all $n\in\Z$ where $S(X)$ is the homogeneous coordinate ring of $X$.
\end{defn}

Apparently for $X:=\IP^m$, due to some basic results in combinatorics, $$h_X(n)=\begin{pmatrix}n+m\\m\end{pmatrix}.$$

\begin{defn}
Let $C$ be a curve, we define its \textit{arithmetic genus} to be
$$p:=\dim_{\C}(H^1(C,\SO_C)).$$
The \textit{geometric genus} of $C$ is defined to be the arithmetic genus of the normalization of $C$.
\end{defn}

\begin{rem}
Note that for any smooth curve $C$, the geometric and arithmetic genera coincide. Another way to compute the (geometric) genus of $C$ which is smooth can be $$g=\dim_{\C}(\Gamma(C,\omega_C))$$ where $\omega_C$ is the canonical sheaf of $C$. More details can be found in \cite{HAR77} Chapter II.8 and Chapter IV.1.
\end{rem}

\begin{prop}
Let $C$ be a curve in $\IP^2_k$ of degree $d$. Then its arithmetic genus is $$p=\frac{(d-1)(d-2)}{2}.$$
And $1-h_C(0)=p$.
\end{prop}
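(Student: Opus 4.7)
The plan is to run the standard short exact sequence
\[
0 \to \cO_{\IP^2}(-d) \xrightarrow{\ \cdot f\ } \cO_{\IP^2} \to \iota_*\cO_C \to 0,
\]
where $f$ is a homogeneous generator of degree $d$ of the ideal of $C$ and $\iota \colon C \hookrightarrow \IP^2$ is the closed immersion. Twisting by $\cO(n)$ and taking global sections on $\IP^2$ (which sees $\iota_*\cO_C$ through the coordinate ring of $C$) will reduce both claims to routine bookkeeping of binomial coefficients and standard cohomological vanishing on projective space.

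First I would extract the Hilbert-polynomial identity. Passing to degree-$n$ parts of the induced short exact sequence of graded $R$-modules $0 \to R(-d) \to R \to S(C) \to 0$, additivity of dimensions gives
\[
h_C(n) = \binom{n+2}{2} - \binom{n-d+2}{2},
\]
using the formula for the Hilbert polynomial of $\IP^2$ already recorded in the excerpt. Setting $n=0$ and expanding $\binom{2-d}{2} = \tfrac{(d-1)(d-2)}{2}$ yields $h_C(0) = 1 - \tfrac{(d-1)(d-2)}{2}$, so that the second claim $1-h_C(0) = \tfrac{(d-1)(d-2)}{2}$ will be the arithmetic-genus formula provided the first assertion $p = \tfrac{(d-1)(d-2)}{2}$ is established.

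The main step, and the one I expect to require the most care, is identifying $p$ with $1 - h_C(0)$. The plan is to show the equality of polynomials $\chi(C,\cO_C(n)) = h_C(n)$ in the variable $n$: both agree for $n \gg 0$ by Serre's vanishing theorem (higher cohomology vanishes, and $H^0(C,\cO_C(n))$ is exactly the degree-$n$ piece of $S(C)$), and both are polynomial in $n$, so they coincide identically. Evaluating at $n=0$ gives
\[
h_C(0) = \chi(C,\cO_C) = \dim_\C H^0(C,\cO_C) - \dim_\C H^1(C,\cO_C) = 1 - p,
\]
where $\dim_\C H^0(C,\cO_C) = 1$ because $C$ is integral and projective over $\C$, so its global sections form a finite field extension of $\C$, hence $\C$ itself. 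This is the step that genuinely uses that curves in this paper are irreducible and reduced.

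Rearranging $h_C(0) = 1 - p$ delivers $p = 1 - h_C(0) = \tfrac{(d-1)(d-2)}{2}$, proving both assertions simultaneously. An alternative route, which I would mention but not develop, is to bypass Serre vanishing by taking the long exact sequence in cohomology associated to the untwisted exact sequence $0 \to \cO_{\IP^2}(-d) \to \cO_{\IP^2} \to \iota_*\cO_C \to 0$ on $\IP^2$, using the classical computation of $H^i(\IP^2, \cO(k))$ to read off $H^1(C,\cO_C) \cong H^2(\IP^2, \cO(-d))$ and its dimension $\tfrac{(d-1)(d-2)}{2}$ directly; this avoids the polynomial-identity argument but invokes the same cohomology of projective space.
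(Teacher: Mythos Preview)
Your argument is correct, and the Hilbert-polynomial half is essentially identical to the paper's: both pass through the graded exact sequence $0 \to R(-d) \to R \to S(C) \to 0$ and evaluate $h_C(0) = 1 - \tfrac{(d-1)(d-2)}{2}$.

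The difference lies in how the arithmetic genus $p = \dim_\C H^1(C,\cO_C)$ is computed. The paper does not invoke Serre vanishing or the identity $\chi(\cO_C(n)) = h_C(n)$; instead it chooses coordinates so that $C$ misses $[1:0:0]$, covers $C$ by the two affine charts $A = C \cap \{X_1 \neq 0\}$ and $B = C \cap \{X_2 \neq 0\}$, and computes the \v{C}ech complex $0 \to \Gamma(A,\cO_C)\oplus\Gamma(B,\cO_C) \to \Gamma(A\cap B,\cO_C) \to 0$ directly, reading off $\dim_\C H^0 = 1$ and $\dim_\C H^1 = \tfrac{(d-1)(d-2)}{2}$ by hand. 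Your route trades this explicit computation for a cleaner conceptual step (Euler characteristic agrees with the Hilbert polynomial, then plug in $n=0$), at the cost of importing Serre vanishing; your alternative via the long exact sequence and $H^2(\IP^2,\cO(-d))$ is also standard and closer in spirit to the paper's hands-on style, though still relying on the known cohomology of $\IP^2$ rather than a \v{C}ech calculation on $C$ itself. Both your versions are valid and arguably more streamlined; the paper's version is more self-contained.
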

\begin{proof}
Assume $C$ generated by $f$ does not pass through $[1:0:0]$. We partition the curve into two parts $A:=\{X_1\neq0\}\cap C$ and $B:=\{X_2\neq0\}\cap C$. Compute the \v{C}ech complex of
$$0\to\Gamma(A,\SO_C)\oplus\Gamma(B,\SO_C)\xlongrightarrow{\varphi}\Gamma(A\cap B,\SO_C)\to0$$
which will give $$\dim_\C(H^0(C,\SO_C))=1$$
and 
$$\dim_\C(H^1(C,\SO_C))=\frac{(d-1)(d-2)}{2}.$$
On the other hand, there is an exact sequence of graded $R$-modules
$$0\to R(-d)\xlongrightarrow{\cdot f}R\to R/(f)\to0$$
with $R:=k[X_0,X_1,X_2]$. Hence, the Hilbert polynomial of $S(C):=R/(f)$ is
$$h_{S(C)}(n)=\begin{pmatrix}n+2\\2\end{pmatrix}-\begin{pmatrix}n-d+2\\2\end{pmatrix}.$$
which is the same as the Hilbert polynomial of $C$. Hence,
$$1-h_C(0)=\begin{pmatrix}-d+2\\2\end{pmatrix}=\frac{(2-d)(1-d)}{2}=\frac{(d-1)(d-2)}{2}=p.$$
Hence, the result.
\end{proof}

\begin{prop}
Let $C$ be a nodal plane curve of degree $d$. Let $E\to C$ be the normalization of $C$. Resolution of singularity will branch out each node to two preimages for each node. A divisor $\Delta:=\sum p_i$ on $E$ is summing up these. Embed $C\longhookrightarrow\mathbb{P}^2$. Then by successive composition of these two maps, we have a closed immersion $\mu:E\to\mathbb{P}^2$.

There is an \emph{adjunction formula} $$\mu^*\SO_{\mathbb{P}^2}(d-3)\otimes\SO_E(-\Delta)\cong\omega_E$$
or written more compactly $$\SO_{E}(d-3)(-\Delta)\cong\omega_E.$$
Then the geometric genus of $C$ is 
\begin{equation}
\label{eq:1.1}
g=\frac{(d-1)(d-2)}{2}-n.
\end{equation}
\end{prop}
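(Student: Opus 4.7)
The plan is to reduce the computation of the geometric genus $g$ to a computation of $\deg(\omega_E)$ via the stated adjunction isomorphism, and then invoke the classical identity $\deg(\omega_E) = 2g-2$ valid on the smooth projective curve $E$.

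First I would determine $\deg(\mu^{*}\SO_{\mathbb{P}^2}(1))$. Choosing a line $H \subset \mathbb{P}^2$ that avoids every node of $C$ and meets $C$ transversally, Bezout gives $|H \cap C| = d$. Since $\mu: E \to C$ restricts to an isomorphism over the smooth locus of $C$, the pullback divisor $\mu^{*}H$ is effective of degree $d$ on $E$, so $\deg(\mu^{*}\SO_{\mathbb{P}^2}(d-3)) = d(d-3)$. Similarly, $\Delta$ has degree $2n$, since each of the $n$ nodes contributes exactly two preimages under the normalization, whence $\deg(\SO_E(-\Delta)) = -2n$.

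Taking degrees on both sides of the adjunction isomorphism therefore gives
\[
\deg(\omega_E) \;=\; d(d-3) - 2n,
\]
and combining this with $\deg(\omega_E) = 2g-2$ and solving for $g$ yields
\[
g \;=\; \frac{d(d-3)+2}{2} - n \;=\; \frac{(d-1)(d-2)}{2} - n,
\]
which is the claimed formula \eqref{eq:1.1}.

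The main difficulty is not this arithmetic but the adjunction formula itself. Its proof passes through the dualizing sheaf $\omega_C$ of the Gorenstein (in fact local complete intersection) curve $C$: the classical adjunction for a plane curve identifies $\omega_C \cong \SO_{\mathbb{P}^2}(d-3)|_C$, while a local analysis at each node --- where the completed local ring $\C[[x,y]]/(xy)$ normalizes to $\C[[x]] \times \C[[y]]$ and trace-duality identifies $\mu_{*}\omega_E$ with the subsheaf of meromorphic sections of $\omega_C$ having at most simple poles along the two branches --- yields the local isomorphism $\mu^{*}\omega_C \cong \omega_E(\Delta)$. Tensoring the two isomorphisms produces the stated adjunction formula, after which the degree bookkeeping above closes out the proof.
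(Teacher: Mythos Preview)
Your proposal is correct and follows the same route the paper points to: the adjunction isomorphism is established via the Poincar\'e residue (which is exactly what your local trace-duality analysis at each node amounts to), and the genus formula then drops out of the degree identity $\deg\omega_E=2g-2$. The paper's own proof is in fact a one-line reference to \cite{ACG85}, so your write-up is already considerably more detailed than what appears there.
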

\begin{proof}
Compute the Poincar\'e residue. Details refer to \cite{ACG85} P50.
\end{proof}

Because the normalization of curves always exists and give a non-negative geometric genus. Therefore, the number of nodes of a nodal plane curve has an upper bound.

\section{The Parametrization Spaces}
\label{cha:2}

We continue what we have discussed in the introduction.

\begin{defn}
\label{defn:2.1}
Let $\IP^{L_d}$ be the equation space of polynomial of degree $d$. And let $A_d$ be the set of all irreducible curves; let $A_{d,g}$ be the set of all irreducible reduced curves of geometric genus $g$; let $B_{d,n}$ be the set of all irreducible reduced curves with at least $n$ (distinct) nodes; let $D_{d,n}$ be the set of all irreducible reduced curves with exactly $n$ (distinct) nodes, cf. \cite{HAR77} P32.
\end{defn}

$g,n$ can not be negative and have an upper bound. One can show that the genus or the number of nodes can be any arbitrary as long as they do not exceed the upper bound. So throughout, we assume $n,d,g$ are all non-negative and satisfy \equref{eq:1.1}. Apparently given $d$ we have
$$D_{d,n}\subseteq A_{d,g}\subseteq A_d\subseteq\IP^{L_d},$$
and
$$D_{d,n}=B_{d,n}\smallsetminus B_{d,n+1}$$
for appropriate $g$ and $n$.

Our goal is to show that $D_{d,n}$ is smooth and of dimension $L_d-n$ at each point. By dimension or smoothness at a point in a (constructible) set, we mean that in the closure of the set. 

\begin{cla}
Let $C\subseteq\IP^2$ be a plane curve defined by an equation $f$ and $x\in C$ is a node if and only if $\mathrm{Hess}(f)$ is invertible at $x$.
\end{cla}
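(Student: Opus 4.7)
The plan is to work locally in an affine chart $\IA^2 \subset \IP^2$ containing $x$, with affine coordinates $(u,v)$ chosen so that $x$ is the origin. Dehomogenize $f$ and expand $f = f_1 + f_2 + f_3 + \cdots$, where $f_i$ is homogeneous of degree $i$ and the constant term vanishes because $x \in C$. A node is, by definition, a singular point with $\widehat{\cO}_{C,x} \cong \C[[u,v]]/(uv)$; in particular $\nabla f(x) = 0$, i.e.\ $f_1 = 0$. I read the claim with the implicit understanding that $x$ is singular (the claim becomes meaningless otherwise, since the Hessian is not invariantly defined at a smooth point), so that $\mathrm{Hess}(f)(x)$ is well defined as (twice) the symmetric matrix of the binary quadratic form $f_2$.

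Invertibility of $\mathrm{Hess}(f)(x)$ is then equivalent to nondegeneracy of $f_2$, which over $\C$ means $f_2 = \ell_1 \ell_2$ for two distinct linear forms $\ell_1, \ell_2$; after a linear change of the affine coordinates I may assume $f_2 = uv$, so $f = uv + f_3 + f_4 + \cdots$. For direction $(\Leftarrow)$, it remains to promote this identification of quadratic parts to an equality $f = uv$ inside the completed local ring $\widehat{\cO}_{\IA^2,x} \cong \C[[u,v]]$. For this I would invoke the formal Morse lemma in two variables, which produces a formal automorphism of $\C[[u,v]]$ sending $f$ to $uv$: one kills the higher-order terms one degree at a time, each step reducing to a linear system whose solvability is exactly the nondegeneracy of the quadratic part, with convergence in the $(u,v)$-adic topology automatic since each step strictly increases the order. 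This iterative coordinate-change step is the main technical obstacle of the proof, although it is classical. Once $f \equiv uv$ in $\C[[u,v]]$, the completion of $\cO_{C,x}$ is $\C[[u,v]]/(uv)$, so $x$ is a node.

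For direction $(\Rightarrow)$, the key observation is that nondegeneracy of the quadratic part of a singular local equation is an intrinsic invariant of the completed local ring $\widehat{\cO}_{C,x}$, depending only on the image of $f$ modulo $\fm_x^3$. If $x$ is a node, then $\widehat{\cO}_{C,x} \cong \C[[u',v']]/(u'v')$, whose canonical quadratic part $u'v'$ is visibly nondegenerate, so the quadratic part of $f$ in the original coordinates is nondegenerate as well, giving invertibility of $\mathrm{Hess}(f)(x)$. Thus the entire argument reduces to the Morse-lemma step, everything else being linear algebra over $\C$ and the unwinding of definitions.
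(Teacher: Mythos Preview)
Your argument is correct, but it is considerably more elaborate than the paper's because you have chosen a stronger definition of node. The paper works with the classical definition (cf.\ \cite{HAR77}, p.~32): $x$ is a node when the affine equation has no linear part and its quadratic part $aX^2+bXY+cY^2$ factors into two \emph{distinct} linear forms. With that definition the claim is a one-line discriminant computation: the quadratic has distinct factors iff $b^2-4ac\neq 0$, and $\det\mathrm{Hess}(f)(x)=4ac-b^2$, so the two conditions coincide. The case $a=c=0$, $b\neq 0$ is handled by inspection.

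Your route, via $\widehat{\cO}_{C,x}\cong\C[[u,v]]/(uv)$ and the formal Morse lemma, proves the same thing but with an extra layer: you are in effect simultaneously establishing the equivalence of the completion-theoretic and tangent-cone definitions of a node. That equivalence is precisely the content of the Morse-lemma step you isolate. This buys you robustness---your proof works regardless of which definition is taken as primitive---at the cost of invoking a nontrivial (though classical) normal-form theorem where the paper gets by with the quadratic formula. If the ambient paper had adopted the formal-local definition, your argument would be the right one; given that it cites Hartshorne's elementary definition, the discriminant check suffices.
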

\begin{proof}
Without loss of generality, we can prove this at the origin on an affine chart $(X,Y)$. By definition, $f=aX^2+bXY+cY^2+$(high degree terms). If $a$ or $c$ is non-zero, say $a\neq0$, $aX^2+bXY+cY^2=$ has discriminant $b^2-4ac$. The determinant of Hessian matrix of $f$ is $4ac-b^2$. Hence, the invertibility of the Hessian matrix is exactly the same as the condition of $(0,0)$ being a node. And if $a=c=0$ and $b\neq0$, $bXY$ is factorized into two directions, i.e., along $X$ and $Y$.
\end{proof}

\begin{prop}
Retain the notations in \defnref{defn:2.1}, $A_d$ is open in $\IP^{L_d}$.
\end{prop}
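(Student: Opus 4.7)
The plan is to show that the complement of $A_d$ in $\IP^{L_d}$ is closed. By definition, a point of $\IP^{L_d}\setminus A_d$ corresponds to a class $[f]$ of a polynomial of degree $d$ which is either reducible, or the zero polynomial (excluded by projectivization). Since $\C[X,Y,Z]$ is a UFD, reducibility of a homogeneous $f$ of degree $d\geqslant 2$ is equivalent to admitting a factorization $f=gh$ with $1\leqslant\deg g\leqslant d-1$, hence, up to swapping, with $1\leqslant \deg g\leqslant \lfloor d/2\rfloor$. (For $d=1$, every nonzero polynomial is irreducible and the statement is trivial, so I would assume $d\geqslant 2$.)

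For each $e$ in the range $1\leqslant e\leqslant\lfloor d/2\rfloor$, I would introduce the multiplication morphism
$$m_e : \IP^{L_e}\times\IP^{L_{d-e}}\longrightarrow\IP^{L_d},\qquad ([g],[h])\longmapsto [gh].$$
This is well-defined because for nonzero $g,h$ the product $gh$ is again nonzero (the ambient ring is an integral domain) and because $[\lambda g]\cdot[\mu h]=[\lambda\mu\,gh]=[gh]$; it is a morphism because the coefficients of $gh$ are bilinear polynomial expressions in the coefficients of $g$ and $h$, so $m_e$ is given in coordinates by biforms of bidegree $(1,1)$.

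The source $\IP^{L_e}\times\IP^{L_{d-e}}$ is a product of projective varieties, hence complete, so the image $\im(m_e)$ is a closed subset of $\IP^{L_d}$. Combining with the factorization remark above,
$$\IP^{L_d}\setminus A_d \;=\; \bigcup_{e=1}^{\lfloor d/2\rfloor}\im(m_e),$$
which is a finite union of closed sets and therefore closed. Consequently $A_d$ is open, as desired.

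The argument is essentially routine and I do not expect a genuine obstacle; the only point meriting a line of justification is the closedness of $\im(m_e)$, which follows from the properness of projective morphisms applied to $\IP^{L_e}\times\IP^{L_{d-e}}\to\Spec(\C)$. Everything else reduces to unique factorization in $\C[X,Y,Z]$.
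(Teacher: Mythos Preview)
Your proof is correct and follows the same approach as the paper: the complement of $A_d$ is the finite union over $e$ of the images of the multiplication maps $\IP^{L_e}\times\IP^{L_{d-e}}\to\IP^{L_d}$, each of which is closed. If anything, your justification via completeness of projective varieties is cleaner than the paper's appeal to ``Chevalley's theorem,'' which on its own only gives constructibility rather than closedness.
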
 
\begin{proof}
If a polynomial is reducible or non-reduced. Then we can factor it into polynomials of strictly smaller degrees. This will gives a morphism of projective varieties $\IP^{L_{d_1}}\times\IP^{L_{d_2}}\to\IP^{L_{d}},([a_{ij}],[b_{lk}])\mapsto[\sum_{i+l=r,j+k=s}a_{ij}b_{lk}]$. Hence, the image is closed due to Chevalley's theorem. $A_d$ corresponds to the image taken away all these closed subsets.
\end{proof}
\begin{defn}

Given $d,n$ we use the space $\IP^{L_d}\times(\IP^2)^n$ to indicate the the coefficients of curves with the first component and the second one denotes the possible singular points.
Consider the following equations 
\begin{equation}
\begin{cases}
\label{eq:1.2}
\varphi_k(c_{ij},X_k,Y_k)=\sum_{ij}c_{ij}X_k^iY_k^j\overset{!}{=}0\\
\psi_k(c_{ij},X_k,Y_k)=\sum_{ij}ic_{ij}X_k^{i-1}Y_k^j\overset{!}{=}0\\
\theta_k(c_{ij},X_k,Y_k)=\sum_{ij}jc_{ij}X_k^iY_k^{j-1}\overset{!}{=}0\\
\end{cases}
\end{equation}
where $1\leqslant k\leqslant n$.
We denote the closed set defined by these equations in $\IP^{L_d}\times(\IP^2)^n$ by $E_{d,n}$. Let $$H_{d,n}:=\{(C,p_1,...,p_n)|C\in B_{d,n}\text{ and with distinct nodes } p_1,...,p_n\}$$
and
$$K_{d,n}:=\{(C,p_1,...,p_n)|C\in D_{d,n}\text{ and with distinct nodes } p_1,...,p_n\}.$$
\end{defn}

\begin{prop}
\label{prop:5.18}
$H_{d,n}$, $K_{d,n}$, $B_{d,n}$ and $D_{d,n}$ are constructible.
\end{prop}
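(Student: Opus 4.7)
The plan is to realise $H_{d,n}$ as a locally closed subset of $\IP^{L_d}\times(\IP^2)^n$, and then to deduce the other three sets from it via Chevalley's theorem on the constructibility of images and the boolean-algebra closure properties of constructible sets. Concretely, $E_{d,n}$ is closed by its very definition: the equations \equref{eq:1.2} are homogeneous, and by Euler's relation the vanishing of the three displayed partial derivatives at $p_k$ is equivalent to $p_k$ being a singular point of $C$, independently of which affine chart of $\IP^2$ one uses. I then intersect $E_{d,n}$ with three explicit open subsets: (i) the preimage of $A_d\subseteq\IP^{L_d}$ under the first projection, open by the previous proposition; (ii) the complement of the big diagonal in $(\IP^2)^n$, open because the diagonal is closed; and (iii) the locus where the Hessian $\mathrm{Hess}(f)$ is invertible at every $p_k$, open because the Hessian determinants are polynomials in the $c_{ij}$ and in the coordinates of the $p_k$. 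By the Claim, condition (iii) upgrades ``singular point'' to ``node'', and so the triple intersection coincides set-theoretically with $H_{d,n}$, which is therefore locally closed and hence constructible.

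Next, $B_{d,n}$ is the image of $H_{d,n}$ under the first projection onto $\IP^{L_d}$, so Chevalley's theorem immediately yields its constructibility. For $D_{d,n}$ I invoke the identity $D_{d,n}=B_{d,n}\setminus B_{d,n+1}$ recorded just after \defnref{defn:2.1}; by the remark closing \charef{cha:1} the number of nodes is bounded above, so $B_{d,n+1}$ is either constructible by the same recipe or empty for $n$ large, and the difference is again constructible. Finally, $K_{d,n}$ is the intersection of $H_{d,n}$ with the preimage of $D_{d,n}$ under the first projection, which is constructible as intersections and preimages under morphisms preserve this class.

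The main hurdle is administrative rather than conceptual: one has to verify that the equations \equref{eq:1.2}, written in affine coordinates on $\IP^2$, glue coherently into a closed subscheme of $\IP^{L_d}\times(\IP^2)^n$, and that the Hessian invertibility locus likewise glues into a globally well-defined open set. Once this affine bookkeeping is done, everything else is formal manipulation inside the boolean algebra of constructible sets, the only nontrivial geometric input being the Claim identifying nodes with Hessian-nondegenerate singular points.
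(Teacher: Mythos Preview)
Your proof is correct and follows the same strategy as the paper: realise $H_{d,n}$ as locally closed in $E_{d,n}$ by imposing the open conditions (distinct marked points, nondegenerate Hessian at each), and then derive the remaining three sets via Chevalley's theorem and boolean operations. The only difference is the bookkeeping order. The paper first builds $K_{d,n}$ directly from $H_{d,n}$ by subtracting the image of $H_{d,n+1}$ under the projection $\IP^{L_d}\times(\IP^2)^{n+1}\to\IP^{L_d}\times(\IP^2)^n$ that forgets the last marked point, and only then projects $H_{d,n}$ and $K_{d,n}$ to $\IP^{L_d}$ to obtain $B_{d,n}$ and $D_{d,n}$; you instead project $H_{d,n}$ first to get $B_{d,n}$, form $D_{d,n}=B_{d,n}\setminus B_{d,n+1}$, and then recover $K_{d,n}$ as $H_{d,n}\cap\pi^{-1}(D_{d,n})$. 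Both orders are equally valid. You are in fact more careful than the paper in one respect: you explicitly intersect with the preimage of $A_d$ to enforce irreducibility, which the paper's proof neglects even though the definitions of $B_{d,n}$ and $D_{d,n}$ demand it.
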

\begin{proof}
On a patch of affine charts of $(\IP^2)^n$,
\equref{eq:1.2} can be translated as the condition that points passing through the curve, and the points are singular. But we do not get all desired curves. First, we should consider all affine charts and the total number of equation is added up to $3n$. Second, we should also carve out the diagonals on $(\IP^2)^n$ since the points are chosen to be distinct.
Precisely, for each $n$ set $$E'_{d,n}:= E_{d,n}\cap(\IP^{L_d}\times((\IP^2)^n\smallsetminus\mathrm{Diagonals}))$$
which is the set of $(C,p_1,...,p_n)$ with $C$ singular at these pairwise distinct points.

Continue writing another other equation
$$\left(\sum_{i,j}i(i-1)c_{i,j}X_k^{i-2}Y_k\right)\left(\sum_{i,j}j(j-1)c_{i,j}X_kY_k^{j-2}\right)-$$
$$\left(\sum_{i,j}ijc_{ij}X_k^{i-1}Y_k^{j-1}\right)^2\overset{!}{=}0$$
to carve out further closed sets in order to get $H_{d,n}$. We have only imposed open relations. Therefore, $H_{d,n}$ is locally closed and the closure of $H_{d,n}$ is $E_{d,n}$. Project $H_{d,n+1}$ to $\IP^{L_d}\times(\IP^2)^n$ by omitting the last coordinate and take the difference between $H_{d,n}$ and the image of $H_{d,n+1}$. Thus, we get the constructible set $K_{d,n}$. Finally, project $H_{d,n}$ and $K_{d,n}$ to $\IP^{L_d}$. Consequently, we get $B_{d,n},D_{d,n}$ which are all constructible.
\end{proof}

Now that they are constructible sets, then the smoothness or dimension are enough to be checked on the closed points which is mentioned in \cite{HAR77} P177. And the smoothness is coincide with the version in differential geometry.

\begin{prop}
\label{prop:5.19}
$K_{d,1}\subseteq\IP^N\times\IP^2$ is smooth of dimension $L_d-1$.
\end{prop}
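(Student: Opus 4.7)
The plan is to apply the Jacobian criterion to the three defining equations $\varphi,\psi,\theta$ of \equref{eq:1.2} (with $k=1$), which cut out $E_{d,1}\subseteq\IP^{L_d}\times\IP^2$. I will show that the Jacobian has maximal rank $3$ at every point of $K_{d,1}$, so that $E_{d,1}$ is smooth of codimension $3$, hence of dimension $L_d+2-3=L_d-1$ there; combined with the local openness of $K_{d,1}$ in $E_{d,1}$, this will give the smoothness of $\overline{K_{d,1}}$ asserted by the proposition (in the sense recalled just before Claim~2.2).

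Fix $(C,p)\in K_{d,1}$ and pass to affine charts. Choose coordinates on $\IP^2$ so that $p=(0,0)$, and pick an affine chart of $\IP^{L_d}$ by normalizing some coefficient $c_{i_0j_0}$ with $i_0+j_0\geqslant 2$ to be $1$; such a coefficient must exist, since the nodal conditions at $p$ force $c_{00}=c_{10}=c_{01}=0$ while $C$ itself is not the zero polynomial. In this chart the ambient variety is affine of dimension $L_d+2$, with $c_{00},c_{10},c_{01},X,Y$ among the free coordinates (because $(i_0,j_0)\notin\{(0,0),(1,0),(0,1)\}$).

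A direct differentiation of \equref{eq:1.2} gives, for the $3\times 3$ Jacobian block with columns $c_{00},c_{10},c_{01}$, the upper-triangular matrix
\begin{equation*}
\begin{pmatrix}
1 & X & Y \\
0 & 1 & 0 \\
0 & 0 & 1
\end{pmatrix},
\end{equation*}
whose determinant is $1$. Hence the Jacobian of $(\varphi,\psi,\theta)$ has rank $3$ identically on the chart, and in particular at $(C,p)$; by the Jacobian criterion, $E_{d,1}$ is smooth of dimension $L_d-1$ at that point.

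It remains to verify that $K_{d,1}$ is open in $E_{d,1}$ locally at $(C,p)$, so that $\overline{K_{d,1}}$ coincides with $E_{d,1}$ in a neighborhood and inherits the above smoothness. The openness of $H_{d,1}$ in $E_{d,1}$ follows from the proof of \propref{prop:5.18}: for $n=1$ the diagonal excision is vacuous and only the open Hessian condition is imposed. For the passage from $H_{d,1}$ to $K_{d,1}$, one invokes upper semicontinuity of the number of singular points of a plane curve: since $C$ is smooth away from $p$, nearby $C'$ stay smooth away from $p$, while the local versal deformation of a node produces at most one singular point near $p$; hence any nearby $(C',p')\in H_{d,1}$ automatically lies in $K_{d,1}$. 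The main (modest) obstacle is this openness bookkeeping and the projective-chart setup; the Jacobian calculation itself is the clean identity block above.
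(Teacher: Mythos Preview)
Your proof is correct, and the Jacobian computation is clean. It is, however, a genuinely different minor from the one the paper chooses. The paper differentiates $(\varphi,\psi,\theta)$ with respect to the three variables $(c_{00},X_1,Y_1)$ and obtains the block
\[
\begin{pmatrix}1 & 0\\ * & \mathrm{Hess}(C)_p\end{pmatrix},
\]
whose invertibility relies on Claim~2.2 (the Hessian at a node is nondegenerate). You instead differentiate with respect to $(c_{00},c_{10},c_{01})$ and get a unipotent $3\times3$ block, so rank $3$ holds identically on the chart without invoking the node condition at all. Your route is slicker for $n=1$; the paper's route, on the other hand, is a deliberate warm-up for \propref{prop:2.8}, where for $n>1$ one cannot use the three low-order coefficients at several distinct points simultaneously, and the $(c,X,Y)$-block with the Hessian in the corner is exactly what reappears (combined with the linear independence of \propref{prop:5.20}). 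So each choice buys something: yours gives the cheapest proof of the present statement, while the paper's sets up the pattern needed later.

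Your final paragraph on openness of $K_{d,1}$ inside $E_{d,1}$ is more careful than the paper, which only records that $H_{d,1}$ is open in $E_{d,1}$ and leaves the passage to $K_{d,1}$ implicit. Your semicontinuity sketch is the right idea; note that a fully rigorous version of precisely this ``no new singularities nearby'' step is what the paper develops at length in \charef{cha:3} (cf.\ \propref{prop:5.28}), so it is fair to leave it informal here.
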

\begin{proof}
Any $(C,p)$ is a curve with a node at $p$ which satisfies \equref{eq:1.2}. Pick a chart such that the node is at $(0,0)$. The Jacobian matrix at the point with respect to only $(c_{00},X_1,Y_1)$ is 
$$\begin{pmatrix}1 & 0\\ * & \mathrm{Hess}(C)_p \end{pmatrix}.$$
It defines a codimension $3$ condition in $\IP^{L_d}\times\IP^2$ since the Hessian is invertible. Indeed, on a small enough analytically open ball around that curve along the manifold, they all correspond to curves has singular points with non-degenerating Hessian. Then along each point the dimension of tangent space is constantly of dimension $L_d+2-3=L_d-1$. $H_{d,1}$ is locally closed, actually open in $E_{d,n}$. Hence, we can apply constant rank theorem which gives the desired dimension.
\end{proof} 

\begin{prop}
\label{prop:5.20}
Let $p_1,...,p_n$ be nodes on a nodal plane curve of degree $d$. They will impose independent condition on system $|\SO_{\IP^2}(d-3)|$ of plane curves of degree $d-3$.
\end{prop}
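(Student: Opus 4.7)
The plan is to recast ``imposing independent conditions'' as a cohomological equality and then exploit the adjunction formula from \charef{cha:1} to bound the relevant dimension from above via regular differentials on the normalization. Writing $Z := \{p_1, \dots, p_n\}$ for the reduced zero-dimensional subscheme of nodes, the claim is equivalent to
$$\dim_\C H^0(\IP^2, \cI_Z(d-3)) = \binom{d-1}{2} - n.$$
Because $n$ point conditions can cut the dimension by at most $n$, one direction $\dim_\C H^0(\IP^2, \cI_Z(d-3)) \geq \binom{d-1}{2} - n$ is automatic, and all the work lies in the reverse inequality.

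I would produce this upper bound by constructing an injective linear map
$$\alpha : H^0(\IP^2, \cI_Z(d-3)) \longrightarrow H^0(E, \omega_E).$$
For $s$ vanishing at each $p_i$, the pullback $\mu^* s$ lies in $H^0(E, \mu^* \SO_{\IP^2}(d-3))$, and the key local observation is that the normalization separates the two transverse branches of $C$ at each node $p_i$, so $\mu^* s$ vanishes at both preimages of $p_i$ in $E$. Dividing by a local equation of $\Delta$ then produces a section of $\mu^* \SO_{\IP^2}(d-3) \otimes \SO_E(-\Delta) \cong \omega_E$ by the adjunction formula of \charef{cha:1}, which I take as the definition of $\alpha$. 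Injectivity is immediate: if $\mu^* s \equiv 0$, then $s$ vanishes on the irreducible curve $C = \mu(E)$, and since $\deg s = d - 3 < d$, this forces $s = 0$.

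To conclude, \equref{eq:1.1} gives $\dim_\C H^0(E, \omega_E) = g = \binom{d-1}{2} - n$, so injectivity of $\alpha$ delivers exactly the required upper bound. The main obstacle I anticipate is the local analysis near each node: one has to check carefully, on each of the two branches in analytic coordinates, that $\mu^* s$ vanishes to order at least one at each preimage, so that the quotient by a local equation of $\Delta$ is a \emph{genuine} regular section of $\omega_E$ rather than one with a spurious pole. Once that verification is in hand, the rest of the argument is a direct dimension comparison using results already established in \charef{cha:1}.
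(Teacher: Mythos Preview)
Your proposal is correct and follows essentially the same strategy as the paper: both obtain the trivial lower bound and then bound from above by injecting the space of degree $d-3$ forms vanishing at the nodes into $H^0(E,\omega_E)$ via the adjunction isomorphism $\mu^*\SO_{\IP^2}(d-3)\otimes\SO_E(-\Delta)\cong\omega_E$, whose dimension is $g=\binom{d-1}{2}-n$. The only difference is that the paper outsources the construction of this injection to a citation of \cite{FUL98}, whereas you build it explicitly and supply the injectivity argument (degree $d-3<d$ forces $s=0$ once it vanishes on $C$).
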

\begin{proof}
Let $S$ be the vector space of all plane curves determined by degree $d-3$ homogeneous polynomials which passing through $p_1$, ..., $p_n$. The dimension of $S$ is at least $\frac{(d-2)(d-1)}{2}-n$. Due to a statement in \cite{FUL98} P165, the vector space is a subspace of $\Gamma(C,\SO_{C}(d-3)(-\Delta))$. Per definition $\SO_{C}(d-3)$ is the pullback sheaf of $\SO_{\mathbb{P}^2}(d-3)$ and $\SO(-\Delta)$ gives the sheaf of those sections vanishing at these specific point. On the other hand, $$\Gamma(C,\SO_{C}(d-3)(-\Delta))\cong\Gamma(C,\SO_{C}(d-3)(-\Delta))\cong H^0(C,\omega_C):=g.$$
Equation $(1)$ gives exactly the same bound. Hence, the result.
\end{proof}
\begin{prop}
\label{prop:2.8}
Retain the notations in \defnref{defn:2.1}, $K_{d,n}$ is smooth and the dimension of tangent space at each point is $L_d-n$.
\end{prop}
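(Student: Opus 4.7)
The plan is to generalise the Jacobian argument of \propref{prop:5.19} from a single node to all $n$ nodes simultaneously, using \propref{prop:5.20} as the essential new input. The first step is to work locally: near any closed point $(C,p_1,\ldots,p_n)\in K_{d,n}$, pick affine charts of $\IP^2$ around each $p_k$; then $K_{d,n}$ is carved out, inside an open subset of $\IP^{L_d}\times(\IP^2)^n$ (where the diagonals are excluded, each Hessian is invertible, and the curve carries no further singularities---all open conditions), by the $3n$ equations $\varphi_k=\psi_k=\theta_k=0$ from \equref{eq:1.2}. Since the ambient smooth space has dimension $L_d+2n$, it will be enough, by the constant rank theorem, to show that the Jacobian of these $3n$ equations has rank $3n$ at the point in question; this will then force smoothness of $K_{d,n}$ of dimension $L_d-n$.

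The second step is to exhibit the block structure of this Jacobian. I would order the rows as $(\varphi_1,\ldots,\varphi_n,\psi_1,\theta_1,\ldots,\psi_n,\theta_n)$ and the columns so that the coefficient variables $c_{ij}$ precede the coordinates $(X_1,Y_1),\ldots,(X_n,Y_n)$. Writing $f$ for a local equation of $C$, one notes that $\partial\varphi_k/\partial X_l$ and $\partial\varphi_k/\partial Y_l$ equal $\delta_{kl}f_X(p_k)$ and $\delta_{kl}f_Y(p_k)$ respectively, and both vanish at a node; meanwhile the $(\psi_k,\theta_k)$-rows contribute to the $(X_l,Y_l)$-columns only when $l=k$, where they give exactly the block $\mathrm{Hess}(f)(p_k)$. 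The Jacobian therefore takes the shape
\[
\begin{pmatrix} A & 0 \\ B & H \end{pmatrix},
\]
where $H$ is the $2n\times 2n$ block-diagonal matrix of Hessians, invertible because each $p_k$ is a node, and $A$ is the $n\times L_d$ matrix whose $k$-th row lists the monomials $(X_k^iY_k^j)_{i,j}$, namely the evaluation at $p_k$ of polynomials of degree $\leqslant d$.

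The final step, and the main obstacle, is to verify that $A$ has full row rank $n$. This is precisely the statement that $p_1,\ldots,p_n$ impose $n$ independent linear conditions on plane curves of degree $d$. Here I would invoke \propref{prop:5.20}: the same points already impose $n$ independent conditions on $|\SO_{\IP^2}(d-3)|$, and multiplying each separating polynomial of degree $d-3$ by a cubic chosen not to vanish at any $p_k$ upgrades the independence to degree $d$. Combining $\rank(A)=n$ with the invertibility of $H$ yields total rank $3n$, which completes the plan. Apart from this independence step (for which the serious geometric input was already packaged into \propref{prop:5.20} via the adjunction formula), the rest of the argument is a routine extension of the $n=1$ case of \propref{prop:5.19}.
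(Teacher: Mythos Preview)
Your proposal is correct and follows essentially the same route as the paper: set up the $3n$ defining equations, compute the Jacobian at $(C,p_1,\ldots,p_n)$, use the nodal Hessians for the $2n$ ``point'' rows and \propref{prop:5.20} for the remaining $n$ ``evaluation'' rows, and conclude by the constant rank theorem. The only cosmetic difference is that the paper first performs a linear change of the coefficient variables $c_{ij}\rightsquigarrow\tilde c_l=\sum_{i,j}x_l^iy_l^jc_{ij}$ so that the top block of the Jacobian literally becomes the identity, whereas you leave the coordinates alone and argue directly that the evaluation matrix $A$ has full row rank; these are equivalent, and your explicit upgrade from degree $d-3$ to degree $d$ (via multiplication by a cubic missing the $p_k$) is in fact a step the paper leaves implicit.
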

\begin{proof}
By \propref{prop:5.18}, we know $K_{d,n}$ is constructible. Therefore, it suffices to show that it is of dimension $L_d-n$. Pick any nodal plane curve $(C,p_1,...,p_n)$ with $n$ nodes. We will work on an affine chart of $\IP^2$ such that all points are given by $p_l=(x_l,y_l)$ on the affine plane and take an affine chart of $\IP^{L_d}$ such that the coefficient of the polynomial which determines the curve is give by $c_{ij}$. Set $$\tilde{c}_l:=\sum_{i,j}x^i_ly^j_lc_{ij}.$$
Due to \propref{prop:5.20}, this is surjective. Then we can extend this to a bijective linear map by introducing more variables to the list of $\{\tilde{c}_l\}$. This gives a map $$F:(\tilde{c}_l,X,Y)\mapsto(c_{ij},X,Y)\mapsto(\varphi_k,\psi_k,\theta_k)\in\C^{3n}.$$
One computes the Jacobian,
$$J_{(C,p_1,...,p_n)}(F)=\begin{pmatrix}
1 & 0 & 0 & 0 & \\
* & \mathrm{Hess}(f)_{p_1} & * & 0 & ...\\
0 & 0 & 1 & 0 &\\
* & 0 & * & \mathrm{Hess}(f)_{p_2} &\\
 &  & ... &  & \\
\end{pmatrix}.$$
$\partial_{\tilde{c}_l}\varphi_k=\delta_{lk}$ because we can rewrite $$\sum_{i,j}c_{ij}X_k^iY_k^j=\sum_{i+j>0}\bar{c}_{ij}(X_k-x_k)^i(Y_k-y_k)^j+\tilde{c}_k$$
where $\bar{c}_{ij}$ is uniquely determined by $c_{ij}$, hence, by $\tilde{c}_{l}$.\\
Apply Gauss elimination, we get 
$$\begin{pmatrix}
1 & 0 & 0 & 0 & \\
* & \mathrm{Hess}(f)_{p_1} & 0 & 0 & ...\\
0 & 0 & 1 & 0 &\\
0 & 0 & * & \mathrm{Hess}(f)_{p_2} &\\
 &  & ... &  & \\
\end{pmatrix}.$$
We can see that this is full rank, i.e., $3n$. By constant rank theorem, $K_{d,n}$ is smooth and the tangent space is of dimension $L_d+2n-3n=L_d-n$. 
\end{proof}
 
\section{Resolve the Problem}
\label{cha:3}

But we are not done yet: what we proved was the space $K_{d,n}$ not $D_{d,n}$. An educated intuition leads to that $D_{d,n}$ is smooth and of dimension $L_d-n$ because $K_{d,n}$ is nothing but an $(n!)$-fold covering of $D_{d,n}$ by permuting all nodes. But a very a bad thing will happen: even an injective immersion can fail to be an embedding. This was the problem in \cite{HAM98} P31 where the author simply claimed that the local isomorphism on tangent would implies global embedding. The treatment of the author for local isomorphism was correct. But he still assumed the smoothness of $D_{g,n}$. A priori, it is not given! Till this point, we have not show it the smoothness yet.

It seems that $\pi:E_{d,n}\to\overline{D_{d,n}}$ is \'etale at each point in $K_{d,n}$. $\pi$ is locally of finite presentation. Fiber at each point just has finitely many points. Remain to show is the flatness of the morphism. However, without smoothness at the points $D_{d,n}$, it seems to be impossible to deduce. Indeed, consider the normalization of a curve with just one node and remove one of the preimage of the node. The morphism is not \'etale, but the fiber of morphism always have one point.

In order to cover the missing point, we will adapt to the differential geometry reflection of this problem. Consider the famous Hopf-fibration
$$U(1)\cong S^1\to S^{2n+1}\xlongrightarrow{p}\C\IP^n.$$
Precisely, $p:(z_0,z_1,...,z_n)\mapsto[z_0:z_1:...:z_n]$ with a fiber $S^1$ since $(z_0,z_1,...,z_n)$ and $(e^{i\theta}z_0,e^{i\theta}z_1,...,e^{i\theta}z_n)$ both lie on the curve and will correspond to the same point in $\C\IP^n$. Multiplying by $e^{i\theta}$ is a Lie group action. The Lie group $U(1)$ acts properly, freely and isometrically on $S^{2n+1}$ which gives a quotient manifold structure on $\C\IP^n$.

In the way, thinking $(S^{2n+1},g_{\mathrm{sph}})$ as Riemmanian manifold, we have a unique way to let $\C\IP^n$ carry a metric $h$ making $p$ into a Riemmanian submersion which is the \textit{Fubini-Study} metric, $d_{n}^{FS}$. Rather than computing the distance by the metric, we should think of it more geometrically. $S^{2n+1}\to\C\IP^n$ is a Riemannian submersion. Because The fiber at each point (e.g., $[1:0:...:0]$) has a Hopf fiber diffeomorphic to $S^1$, locally we can lift a geodesic of $\C\IP^n$ upon $S^{2n+1}$. The geodesic is always perpendicular to the Hopf fiber and is always a minimizing geodesic locally. The exponential map $\exp_p:\mathrm{T}_p\C\IP^n\to\C\IP^n$ gives a local diffeomorphism within the ball of radius $\pi/2$. But outside ball it fails to be injective which means there are multiple geodesics hitting the same point on the equator.

\begin{lem}
\label{lem:5.24}
For any point $p\in\C\IP^n$, the cut locus of $p$ is $\{q\in\C\IP^n\mid d^{FS}_n(p,q)=\pi/2\}$ which is isometric to $\C\IP^{n-1}$. Explicitly, if $p=[a_0:...:a_n]$, then
$$\mathrm{Cut}(p)=\left\{[b_0:...:b_n]\in\C\IP^n\left|\,\,\sum a_ib_i=0\right.\right\}.$$
And the distance between $[1:a_1:....:a_n]$ and the hyperplane $\{[0:b_1:....:b_n]\in\C\IP^n\}$ is $$\arccos\left(\sqrt{\frac{|a_1|^2+...+|a_n|^2}{1+|a_1|^2+...+|a_n|^2}}\right).$$
\end{lem}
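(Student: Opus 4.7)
The plan is to exploit the transitive isometric action of $U(n+1)$ on $(\C\IP^n, d^{FS}_n)$ (the $U(n+1)$-action on $\C^{n+1}$ preserves $S^{2n+1}$ and commutes with the Hopf $U(1)$-action, so it descends to isometries) to reduce to the case $p = [1:0:\ldots:0]$, with unit lift $\tilde p = (1,0,\ldots,0) \in S^{2n+1}$. The vertical subspace at $\tilde p$ is spanned by $i\tilde p$, so the horizontal subspace is $\{(0,v_1,\ldots,v_n) : v_j \in \C\}$. For any unit horizontal vector $v$, the great circle $\gamma_v(t) := \cos(t)\tilde p + \sin(t) v$ is a geodesic of $S^{2n+1}$ which stays horizontal (one checks that $\gamma_v'(t)$ is real-orthogonal to $i\gamma_v(t)$), hence $\pi\circ\gamma_v$ is a unit-speed geodesic in $\C\IP^n$ emanating from $p$.

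From this I would derive the closed-form distance formula
\[
d^{FS}_n([a],[b]) = \arccos\frac{|\langle a,b\rangle_{\C}|}{\|a\|\,\|b\|},
\]
where $\langle\cdot,\cdot\rangle_{\C}$ is the standard Hermitian inner product on $\C^{n+1}$: take unit lifts, rotate one of them by the unique $U(1)$-element making the inner product real and non-negative, and measure along the resulting great circle. Two consequences are immediate: the diameter of $\C\IP^n$ is at most $\pi/2$, and $d^{FS}_n(p,[b]) = \pi/2$ if and only if $[b]$ lies in the projective hyperplane Hermitian-orthogonal to $[a]$. A unitary rotation back yields the general hyperplane description in the statement.

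To identify this hyperplane $H$ with the cut locus of $p$, I would argue in two steps. First, on the open ball of radius $\pi/2$ in $T_p\C\IP^n$ the exponential map is a diffeomorphism onto $\C\IP^n\setminus H$: injectivity follows from the distance formula, since for $t<\pi/2$ one has $\cos(t) = \langle \tilde p, \gamma_v(t)\rangle_{\C}$ and hence both $t$ and the horizontal direction $v$ are recovered. Second, past $t = \pi/2$ the geodesic $\pi\circ\gamma_v$ ceases to be minimizing: the identity
\[
\gamma_v\bigl(\tfrac{\pi}{2}+s\bigr) = -\sin(s)\tilde p + \cos(s) v = -\bigl(\sin(s)\tilde p - \cos(s) v\bigr)
\]
shows that $\pi\circ\gamma_v(\pi/2+s)$ coincides (after a $U(1)$-gauge by $e^{i\pi}$) with the endpoint of the geodesic from $p$ of length $\pi/2-s$ in the direction $-v$, providing a strict shortcut. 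On the boundary sphere of radius $\pi/2$, the endpoint map $v\mapsto\pi(\gamma_v(\pi/2))$ is exactly the Hopf fibration $S^{2n-1}\to\C\IP^{n-1}$, which both explains why the cut locus is the entire hyperplane and why it inherits the Fubini--Study metric of $\C\IP^{n-1}$; independently, $H$ is totally geodesic because a horizontal geodesic of $S^{2n+1}$ starting in $\{w_0=0\}$ with initial velocity in $\{(0,\ast,\ldots,\ast)\}$ stays in $\{w_0=0\}$.

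For the explicit distance from $[1:a_1:\ldots:a_n]$ to $H$, the distance formula gives
\[
d^{FS}_n\bigl([1:a_1:\ldots:a_n],\,H\bigr) \;=\; \min_{[b]\neq 0}\arccos\frac{|\textstyle\sum_{i=1}^n \overline{a_i}\, b_i|}{\sqrt{1+\textstyle\sum|a_i|^2}\,\sqrt{\textstyle\sum|b_i|^2}},
\]
and Cauchy--Schwarz $|\sum\overline{a_i}b_i|\leq\sqrt{\sum|a_i|^2}\sqrt{\sum|b_i|^2}$, saturated when $b$ is a scalar multiple of $a$, yields precisely the stated $\arccos\sqrt{\sum|a_i|^2/(1+\sum|a_i|^2)}$. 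The main obstacle throughout is the cut-locus step: lifting to $S^{2n+1}$ makes the geometry transparent, but one must carefully verify injectivity of $\exp_p$ on the open ball, identify the boundary collapse as the Hopf fibration, and check that the apparent shortcut past $t=\pi/2$ is genuinely shorter measured in $\C\IP^n$, not merely in $S^{2n+1}$.
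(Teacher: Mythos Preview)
Your argument is correct and shares the paper's core idea of lifting geodesics to $S^{2n+1}$ via the Hopf fibration. The paper, however, defers the rigorous identification of the cut locus to a Jacobi field computation (citing Besse) and offers only a picture for the distance; you instead derive the closed-form Fubini--Study distance $d^{FS}_n([a],[b])=\arccos\bigl(|\langle a,b\rangle_\C|/\|a\|\,\|b\|\bigr)$ and use it directly---together with the explicit shortcut $\pi\circ\gamma_v(\tfrac{\pi}{2}+s)=\pi\circ\gamma_{-v}(\tfrac{\pi}{2}-s)$ and the observation that the diameter is $\pi/2$---to pin down the cut locus with no Jacobi field analysis at all. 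This makes your proof more self-contained and elementary than the paper's, while the Cauchy--Schwarz step for the distance to the hyperplane is exactly the ``Euclidean geometry'' the paper alludes to. One small caveat: what your argument actually produces is the \emph{Hermitian}-orthogonal hyperplane $\sum \bar a_i b_i=0$, not the bilinear condition $\sum a_ib_i=0$ printed in the statement; the latter is a typo (for real $a$ the two coincide, which is why the special case $p=[1:0:\cdots:0]$ hides the issue).
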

\begin{proof}
In order to prove the two statements rigorously, one needs to compute the Jacobi field (details can be found in \cite{BES78} P82). But geometrically, we have already described the geodesics of the manifold: we lift the geodesic of $\C\IP^n$ upon $S^{2n+1}$ where we can compute the distance easily.
\begin{center}
\begin{tikzpicture}
\shade[ball color = gray!40, opacity = 0] (0,0) circle (2cm);
\fill[fill=white] (-2,0) circle (1pt)node[left]{$S^{2n}$};
\draw (0,0) circle (2cm);
\draw (-2,0) arc (180:360:2 and 0.6);
\draw[dashed] (2,0) arc (0:180:2 and 0.6);
\fill[fill=black] (0,0) circle (1pt);
\draw[dashed] (0,0) -- node[above]{} (1.6,-0.3);
\draw[dashed] (0,0) -- node[above]{} (1.2,1.2);
\fill[fill=black] (1.2,1.2) circle (1pt) node[left]{$p$};
\draw (1.2,1.2) arc(45:13.4:1.9 and 3.2);
\fill[fill=white] (-2,2) circle (1pt)node[below]{$S^{2n+1}$};
\end{tikzpicture}
\end{center}
The arclength can be easily computed can be computed through Euclidean geometry.
\end{proof}
The significance of the lemma above shows the on the parametrization space of curves, if there is a sequence of curves $\{C_i\}$ converging to a curve $C$ (under $d_n^{FS}$) and the coefficient of a term of $C$ is non-zero, then there exists a $N$ and all $i>N$ the coefficient of the that term of $C_i$ must be also non-zero. In the following, ``near" simply means the ``lying in a sufficiently small neighborhood" which can be circumspectly formulated by $\epsilon$-$\delta$ language of, albeit pedantic. And for now on, a curve is just synonym to the polynomial defining it.

\begin{defn}
Let $C_1,C_2$ be two curves of degree $d$ in $\IP^{L_d}$. The distance between two curves is defined to be the distance between the corresponding points in the complex projective space given by the underlying metric above, $d_{L_d}^{FS}$.
\end{defn}

\begin{lem}
For any Cauchy sequence of curves degree $d$ curves $\{C_i\}_{i\in I}$ with $\lim_{i\apc\infty} C_i\apc C$, we have for big $N$, $C_i$ and $C$ can be contained on the same chart and all terms in $C$ appear also in $C_i$ for all $i>N$. Hence, fix a choice of coefficients on $C$ the coefficients of $C_i$, by some rescaling, the coefficient of each term of $C_i$ is near (in the Euclidean sense) to the same term of that of $C$ for all big $i$.
\end{lem}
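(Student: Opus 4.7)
The plan is to deduce both conclusions from the cut–locus description provided by \lemref{lem:5.24}. Fix any multi-index $(i_0,j_0)$ such that the $X^{i_0}Y^{j_0}Z^{d-i_0-j_0}$-coefficient of $C$ is nonzero, and let $H_{i_0j_0}\subset\IP^{L_d}$ be the coordinate hyperplane cut out by $c_{i_0j_0}=0$. Since $C\notin H_{i_0j_0}$, applying \lemref{lem:5.24} after a coordinate permutation sending the fixed index to the first slot identifies $H_{i_0j_0}$ with the cut locus of the standard coordinate point, and the explicit $\arccos$ formula in that lemma computes
\[
\delta_{i_0j_0}:=d^{FS}_{L_d}(C,H_{i_0j_0})>0.
\]
This is the key positive quantity; everything else will be a triangle-inequality argument.

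Since $\{c_{ij}\neq 0\text{ in }C\}$ is a finite index set, put $\delta:=\tfrac12\min_{(i_0,j_0)}\delta_{i_0j_0}>0$. By the Cauchy/convergence hypothesis, choose $N$ with $d^{FS}_{L_d}(C_i,C)<\delta$ for all $i>N$. Then for every such $(i_0,j_0)$ and every $i>N$, the reverse triangle inequality gives
\[
d^{FS}_{L_d}(C_i,H_{i_0j_0})\geqslant\delta_{i_0j_0}-d^{FS}_{L_d}(C_i,C)>\delta>0,
\]
so $C_i\notin H_{i_0j_0}$, i.e.\ the $(i_0,j_0)$-coefficient of $C_i$ is nonzero. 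Taking the intersection over the finite list of active indices shows that every term appearing in $C$ also appears in $C_i$ for $i>N$, and that $C_i$ lies in the standard affine chart $U_{i_0j_0}=\{c_{i_0j_0}\neq 0\}$ containing $C$.

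Having placed $C$ and all $C_i$ ($i>N$) on the common affine chart $U_{i_0j_0}$, lift to this chart by rescaling both $C$ and $C_i$ so that their $(i_0,j_0)$-coefficients equal $1$; denote the remaining affine coordinates by $(a_\alpha)$ for $C$ and $(a_\alpha^{(i)})$ for $C_i$. On any compact subset of $U_{i_0j_0}$ the Fubini--Study metric is smooth and, via the exponential map of \lemref{lem:5.24}, a diffeomorphism onto its image within the open ball of radius $\pi/2$; in particular it is Lipschitz-equivalent to the Euclidean metric read off from the affine coordinates in a neighborhood of $C$. Hence $d^{FS}_{L_d}(C_i,C)\to 0$ forces $|a_\alpha^{(i)}-a_\alpha|\to 0$ for every $\alpha$, which is the asserted Euclidean closeness of coefficients after rescaling.

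The only nontrivial step is the first one: one must know that distance to a coordinate hyperplane is \emph{strictly} positive off of it, together with a useable formula. Both are supplied by \lemref{lem:5.24}, so the argument reduces to triangle inequalities and the standard fact that the Fubini--Study metric is locally equivalent to the Euclidean metric inside any affine chart; no deeper input is needed.
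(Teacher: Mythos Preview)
Your proof is correct and follows essentially the same approach as the paper's: both arguments hinge on \lemref{lem:5.24} to obtain a strictly positive Fubini--Study distance from $C$ to each coordinate hyperplane on which a coefficient of $C$ does not vanish, and then invoke the equivalence of the Fubini--Study and Euclidean topologies on an affine chart for the final claim. The only cosmetic difference is that the paper phrases the first step contrapositively (``if a term were missing then the distance would be bounded below, contradiction''), whereas you give the direct version with an explicit $\delta=\tfrac12\min\delta_{i_0j_0}$ and a triangle-inequality estimate; your write-up is more detailed but not genuinely different in method.
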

\begin{proof}
If either the first two statements is not true, by \lemref{lem:5.24} we have the distance between infinitely many $C_i$ and $C$ must be always greater than a positive value.

Now, we can study the curves on an affine chart. Then the last statement is automatically true because the induced topology are equivalent.
\end{proof}

The lemma above just says if a sequence of curves converges to a curve, then their coefficients must be ``close" to each other by an appropriate scaling and all terms of the limit must always appear in the sequence after some point.

\begin{lem}
\label{lem:5.27}
$f\in\C[z]$ be a polynomial of degree $n$. Say $f(z)=a_nz^n+a_{n-1}z^{n-1}+...+a_0$ $(a_n\neq0)$ has the set of roots $R(f):=\{p_1,...,p_n\}$ $($need not be distinct$)$. Then for every $m\geqslant n$ and every $\epsilon>0$, there exists $\delta> 0$ such that every polynomial $g(z)=b_mz^m+b_{m-1}z^{m-1}+...+m_0$ $(b_m\neq0)$ having the set of roots $R(g)=\{q_1,...,q_m\}$ $($need not be distinct, either$)$. If assume for all $k=1,...,n ,|a_k-b_k|\leqslant \delta$ and all $k>n,|b_k|<\delta$, we then can find $S\subseteq R(g)$ such that $\card(S)=n$ and $$\sum_{q\in S}d(q,R(f))\leqslant\epsilon.$$
In particular, if $m=n$, in a small neighborhood the numbers of roots counting multiplicities of $f,g$ are the same.
\end{lem}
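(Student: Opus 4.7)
The plan is to prove this by Rouché's theorem, which is the standard tool for the continuity of roots under coefficient perturbations. The case $m > n$ will be handled by simply observing that the extra $m - n$ roots of $g$ can be discarded and need not be tracked near $R(f)$.

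First I would list the \emph{distinct} roots $p_1,\ldots,p_r$ of $f$ with multiplicities $m_1,\ldots,m_r$ summing to $n$. Choose a radius $\rho>0$ that is smaller than $\epsilon/n$ and smaller than half the minimum pairwise distance between the $p_i$, so that the closed disks $D_i:=\overline{B(p_i,\rho)}$ are pairwise disjoint and each $p_i$ is the unique zero of $f$ inside $D_i$. Since $f$ has no zero on the compact set $\bigcup_i\partial D_i$, the quantity $\alpha:=\min_{z\in\bigcup_i\partial D_i}|f(z)|$ is strictly positive.

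Second, I would bound $|f(z)-g(z)|$ uniformly on $\bigcup_i\partial D_i$. Setting $M:=\max_i|p_i|+\rho$, every such $z$ satisfies $|z|\leqslant M$, hence
$$|f(z)-g(z)|\leqslant\sum_{k=0}^{n}|a_k-b_k|\cdot|z|^k+\sum_{k=n+1}^{m}|b_k|\cdot|z|^k\leqslant\delta\cdot(m+1)\max(1,M)^m.$$
Choosing $\delta$ small enough makes the right hand side strictly less than $\alpha$, so $|f-g|<|f|$ pointwise on each $\partial D_i$. Rouché's theorem then implies that $f$ and $g=f+(g-f)$ have the same number of zeros inside $D_i$, namely $m_i$, counted with multiplicity. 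Let $S$ be the multiset of zeros of $g$ lying in $\bigcup_i D_i$; then $\card(S)=\sum_i m_i=n$ and each $q\in S$ is within distance $\rho$ of some $p_i\in R(f)$, so $\sum_{q\in S}d(q,R(f))\leqslant n\rho\leqslant\epsilon$.

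For the \emph{in particular} clause, when $m=n$ the polynomial $g$ has exactly $n$ zeros counted with multiplicity, so every one of them must be among $S$, and the per-disk count $m_i$ given by Rouché forces $f$ and $g$ to have identical multiplicity profiles in a neighborhood of each $p_i$. I do not expect a genuine obstacle here; the only care needed is to fix the constants in the right order (first $\{p_i\}$ and $\rho$, then $M$ and $\alpha$, only then $\delta$), and to note that in the regime $m>n$ the leading coefficient $b_m$ can be arbitrarily small, so the extra $m-n$ roots of $g$ may escape toward infinity but are harmless since they are simply omitted from $S$.
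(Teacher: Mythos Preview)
Your proof is correct and follows essentially the same route as the paper: isolate each distinct root of $f$ in a small disk of radius at most $\epsilon/n$, bound $|g-f|$ on the boundary circles by a constant times $\delta$, and invoke Rouch\'e's theorem (the paper's \thmref{thm:A.1}) to force exactly $m_i$ roots of $g$ into the $i$-th disk. The only cosmetic difference is that the paper imposes the additional constraint $r<1$ on the disk radius so that $|z|^k<1$ on $\partial B$ and the perturbation bound becomes simply $\sum_k|\delta_k|<(m+1)\delta$, whereas you carry the factor $\max(1,M)^m$ explicitly; both lead to the same choice of $\delta$ up to a harmless constant.
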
 
\begin{proof}
Take any point $p\in R(f)$. For ease we think it is $0$. Take a circle $B$ with center $0$ and radius $r<1$ and at most $\epsilon/n$ to separate it from the other (distinct) roots. Let $0<M:=\max_{\partial B}|f(z)|$ since there is no root on this circle. Set
$$h(z):=\delta_mz^{m}+\delta_{m-1}z^{m-1}+...+\delta_{0}.$$
Now, $$|h|\leqslant|\delta_m|r^{m}+|\delta_{m-1}|r^{m-1}+...+|\delta_0|<|\delta_m|+|\delta_{m-1}|+...+|\delta_{0}|.$$
Choose $0<\delta<M/m$. Therefore, for any choice of $\delta_i<\delta$, we have $|h|<|f|$ on $B$. Now, apply \thmref{thm:A.1} for $f$ and $f+h$ which states that they have the same amount of roots in $B$. Apply this procedure for all distinct points in $R(f)$. We will get the desired $\delta$ and $S$.
\end{proof}

\begin{prop}
\label{prop:5.28}
Let $C_1,C_2$ be two curves of degree $d_1,d_2$ intersect transversally at smooth points $p_1,...,p_n$. Then  perturbations on coefficients leave the intersection points staying in a small neighborhood. That is to say for any $\epsilon>0$ there exists $\delta>0$ so that for all curves $C_1',C_2'$ of degree $d_1,d_2$ with $d_{L_{d_1}}^{FS}(C_1,C_1')<\delta,d_{L_{d_2}}^{FS}(C_2,C_2')<\delta$ we have $$\sum_{i}d_{2}^{FS}(C_1'\cap C_2',p_i)<\epsilon.$$
\end{prop}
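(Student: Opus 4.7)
The plan is to work locally around each $p_i$ via the holomorphic implicit function theorem (treating the coefficients of $f_1', f_2'$ as parameters), and then to patch the $n$ local estimates into the claimed global bound. The lemma preceding \lemref{lem:5.27} already says that whenever $d_{L_{d_j}}^{FS}(C_j, C_j') < \delta$ with $\delta$ small, one can rescale so that $C_j'$ is written in a common affine chart with $C_j$ and with coefficients Euclidean-close to those of $C_j$. After also fixing an affine chart $U_i$ of $\IP^2$ containing $p_i$, the FS metric $d_2^{FS}$ is uniformly equivalent to the Euclidean one on a fixed compact neighborhood of $p_i$ inside $U_i$. So it suffices to prove, for each $i$ separately, that if $(f_1, f_2)$ are local affine equations for $C_1, C_2$ on $U_i$ with $p_i$ as an isolated transversal smooth common zero, then every sufficiently small perturbation $(f_1', f_2')$ admits a unique zero $q_i \in B(p_i, r) \subset U_i$ close to $p_i$; such a $q_i \in C_1' \cap C_2'$ will then realize $d_2^{FS}(C_1' \cap C_2', p_i) \leqslant d_2^{FS}(q_i, p_i)$.

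The key local step falls out of transversality. Smoothness of $C_j$ at $p_i$ means $df_j(p_i) \neq 0$, and transversality of the intersection at $p_i$ means $df_1(p_i)$ and $df_2(p_i)$ are $\C$-linearly independent in $T_{p_i}^* \IP^2$. Equivalently, $DF(p_i)$ is invertible for $F := (f_1, f_2) : U_i \apc \C^2$. Regarding the coefficients of $(f_1, f_2)$ as holomorphic parameters $c$ in an open set $\Omega$ of the relevant coefficient space, I would apply the parametric holomorphic implicit function theorem to $F(z, w; c) = 0$ at the base parameter $c_0$: invertibility of the partial Jacobian in $(z, w)$ at $(p_i, c_0)$ produces an open neighborhood $\Omega_0 \subset \Omega$ of $c_0$, a ball $B(p_i, r) \subset U_i$, and a holomorphic map $q_i : \Omega_0 \apc B(p_i, r)$ with $q_i(c_0) = p_i$ and $F(q_i(c), c) = 0$, and moreover $q_i(c)$ is the \emph{unique} zero of $F(\cdot, c)$ in $B(p_i, r)$. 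This is the higher-dimensional analog of the one-variable Rouch\'e argument used in \lemref{lem:5.27}, and one could equally well invoke a multivariate Rouch\'e / argument-principle statement for holomorphic maps $\overline{B(p_i, r)} \apc \C^2$.

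To compile the global estimate, I would fix $r > 0$ smaller than $\epsilon / n$ and small enough that the balls $B(p_i, r)$ are pairwise disjoint, then use continuity of each $q_i$ at $c_0$ to produce $\delta_i > 0$ with $\|q_i(c) - p_i\| < \epsilon / n$ whenever the coefficient perturbation has size $< \delta_i$; take $\delta := \min_i \delta_i$ and shrink it further if needed so that the preceding lemma makes the rescaling and common-chart hypothesis valid and the Euclidean--FS equivalence on each $U_i$ is usable. Summing the $n$ local bounds yields $\sum_i d_2^{FS}(C_1' \cap C_2', p_i) < \epsilon$. The only subtlety I anticipate, rather than a genuine obstacle, is performing the rescaling and the choice of affine charts \emph{uniformly} across $i = 1, \dots, n$: since $n$ is finite and the projective rescaling is a single global operation on each $\IP^{L_{d_j}}$, this amounts to taking min's over finitely many quantities and causes no harm.
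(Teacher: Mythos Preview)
Your argument is correct and takes a genuinely different route from the paper's. You go directly through the parametric holomorphic implicit function theorem: transversality at each $p_i$ is exactly the invertibility of $D(f_1,f_2)(p_i)$, so the perturbed system $(f_1',f_2')$ has a unique zero in $B(p_i,r)$ depending continuously (indeed holomorphically) on the coefficients, and summing over $i$ yields the bound. The paper deliberately avoids any two-variable tool and reduces everything to one complex variable. It first argues by compactness and a contradiction that small perturbations of $C_1,C_2$ remain inside thin tubes around the original curves, so intersections can only occur near the $p_i$; then, to control what happens inside each ball, it slices by a carefully chosen pencil of lines (the ``channel method'') so as to convert the question into one about roots of univariate polynomials with nearby coefficients, where \lemref{lem:5.27} (Rouch\'e) applies; finally B\'ezout (\thmref{thm:A.2}) fixes the global count and forces exactly one intersection per ball. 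Your approach is shorter and conceptually cleaner, and the uniqueness clause of the implicit function theorem gives you the same ``exactly one intersection in each ball'' conclusion that the paper works hard for; the paper's approach trades this directness for more elementary inputs (only one-variable Rouch\'e and B\'ezout).
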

\begin{proof} Without loss of generality, we can assume $C_1,C_2$ intersect in an affine chart. Then we will study the affine equation. Pick $n$ small open balls $B_n$ of radius at most $\epsilon$ centered at $p_1,...,p_n$ such that they are disjoint from each other. Pick any of these ball and assume its center is $(0,0)$ for simplicity. $C_1\smallsetminus \cup_iB_i$ and $C_2\smallsetminus \cup_iB_i$ are compact. Cover $C_1$ and $C_2$ with with finitely many small cubes to separate them which is possible because the space satisfies separation axiom $T_4$.

We claim the two curves will stay in the union of the small enough cubes ($\delta$-neighborhood). Otherwise, we assume there is a sequence of curves $C_{1,i}$ such that $\lim_{i\apc\infty}C_{1,i}=C_1$. But there are always points $q_i\in C_{0}^{(i)}$ such that $q_i$ lies outside the union of these cubes. Say there is a cluster point of $\{q_i\}$ being $q^*$ lying outside the union of these cubes. The existence is guaranteed by the (sequentially) compactness of the space. There must be one leading term on which the all curves does not vanish (for $i$ big enough). 

\begin{center}
\begin{tikzpicture}
\draw plot [smooth,tension=1.2] coordinates {(0,2) (3,1.8) (5,0)};
\fill[fill=black] (3,1.8) circle (1pt) node[above]{$\hat{q}$};
\fill[fill=black] (1.2,0.6) circle (1pt)node[above]{$q^*$};
\fill[fill=black] (1.3,0.533) circle (0.75pt);
\fill[fill=black] (1.4,0.467) circle (0.75pt);
\fill[fill=black] (1.5,0.4) circle (0.75pt);
\draw [line width=0.01mm](0.4,0) -- node[above]{} (3.4,2);
\draw [line width=0.01mm](0.45,0) -- node[above]{} (3.45,2);
\draw [line width=0.01mm](0.5,0) -- node[above]{} (3.5,2);
\draw [line width=0.01mm](0.6,0) -- node[above]{} (3.6,2);
\draw [line width=0.02mm](0.7,0) -- node[above]{} (3.7,2);
\draw [line width=0.02mm](0.9,0) -- node[above]{} (3.9,2);
\draw [dashed](1,0) -- node[above]{} (4,2);
\draw [dashed](0.1,0) -- node[above]{} (3.1,2);
\draw [dashed] (1.2,0.6) circle (0.6cm);
\end{tikzpicture}
\end{center}

We draw a thin channel around the point $q^*$. The direction of the channel is chosen in the way that drawing parallel lines passing through these $q_{i}$'s along the channel will intersect all these curves and keep the degree of these curves $\{C_i\}$ and $C$. This is possible because the number fail to keep the degree will have measure $0$ on the complex plane. The univariate complex equations of resulting polynomials cutting $C_{1,i}$ will converge to the univariate complex polynomial defining by cutting the $C_1$ by the passing through $q^*$. But these polynomials give a root does not converge to any other roots of the original polynomial. This is a direct contradiction to \lemref{lem:5.27}.

Now, all intersection points must be retained in these balls $B_k$. We want to show the nodes can not be ``teleported". Assume the point is $(0,0)$. We may then write the two curves as $aX+bY+\text{higher degree}$ and $cX+dY+\text{higher degree}$ by assumption we have $ad-bc\neq0$. This means: under perturbation the curves remain smooth and transversal if intersect at all in the small neighborhood. Again by the channel method described above we can show that the number of nodes will not increase in each ball: assume there is a sequence of pairs of curves $C_{1,i}$ and $C_{2,i}$ with $\lim_{i\apc\infty}C_{1,i}=C_1$ and $\lim_{i\apc\infty}C_{2,i}=C_2$. Such in some ball $B_k$ containing node $p_k$, the number of nodes must be more than one. Assume there are two sequences $\{q_i\}$ and $\{s_i\}$ approaching to the node and each pair $(p_i, s_i)$ are the intersection points of the two curves $C_{1,i}$ and $C_{2,i}$ in the neighborhood and can be assume to be distinct. Choose a thin channel so that the line in the range is transversal to $C_1$ at $p_i$. Then draw lines $Q_i$ in the following way: First, determine a line $l$ which passing through the node $p_k$ whose direction is around tangent direction of $C_2$ such that $l$ and $C_1$ are transversal the polynomial resulted by intersection between $C_1$ and a line keeps the degree of the $C_1$. We will get a line passing through $p_k$. Second, draw a line $l_i'$ through $q_i$ and $s_i$ whose direction should be around the direction of the tangent line of $C_1$, re-orient $l_i'$ in a small range to get a new line $l_i$ such that its direction is around the direction of $l$ within $1/i$, the resulting polynomial preserves the degree of $C_{1,i}$, and the intersection between $l_i$ and $C_{1,i}$ will be near to $q_i$ and $s_i$, i.e., still in the ball $B_i$. This is possible due to \lemref{lem:5.27}. For all $C_{1,i}$ and $l_i$ the line intersects at two points (at least), but the limit is $C_1$ and $l$ which is transversal to a line passing through $p_k$. All the resulting polynomials are of the same degree and having coefficients sufficiently near. Apply \lemref{lem:5.27}, we get the case is not possible. But the total number of intersection points is constant due to \thmref{thm:A.2}. We can conclude.
\end{proof}

\begin{hyp}
Let $C_1,C_2$ be two curves of degree $d_1,d_2$ intersect points $p_1,...,p_n$. The perturbation on coefficients leave the intersection number staying in a small neighborhood.
\end{hyp}

\begin{prop}
\label{prop:5.30}
$D_{d,n}$ is smooth of dimension $L_d-n$.
\end{prop}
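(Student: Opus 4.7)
The plan is to prove that the first projection $\pi\colon K_{d,n}\to\IP^{L_d}$, $(C,p_1,\ldots,p_n)\mapsto C$, restricts to a local analytic isomorphism from a sufficiently small neighborhood of any lift $\tilde C_0:=(C_0,p_1,\ldots,p_n)\in K_{d,n}$ onto a neighborhood of $C_0$ in $\overline{D_{d,n}}$. Once this is established, the smoothness and dimension transfer directly from $K_{d,n}$, which is smooth of dimension $L_d-n$ by \propref{prop:2.8}, to $\overline{D_{d,n}}$ at every point of $D_{d,n}$.

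For the infinitesimal step I would check that $\mathrm{d}\pi$ is injective at $\tilde C_0$. A tangent vector in $T_{\tilde C_0}K_{d,n}$ satisfies the linearized node equations $(\partial_c\psi_k,\partial_c\theta_k)\cdot\dot c+\mathrm{Hess}(f_{C_0})|_{p_k}(\dot p_k)=0$, and since the Hessian is invertible at a node, $\dot c=0$ forces $\dot p_k=0$. Then, applying the holomorphic implicit function theorem to the pair $(\psi_k,\theta_k)$ in the variables $(X_k,Y_k)$, whose Jacobian is precisely this invertible Hessian, I obtain on a small neighborhood $U$ of $C_0$ unique holomorphic sections $c\mapsto(X_k(c),Y_k(c))$ valued in prescribed disjoint balls $B_k$ around the $p_k$. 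Setting $W:=\{(c,X_1(c),Y_1(c),\ldots,X_n(c),Y_n(c)):c\in U\}\cap K_{d,n}$, the restriction $\pi|_W$ is thereby a holomorphic embedding onto its image.

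The main obstacle is to identify that image: I need $\pi(W)=D_{d,n}\cap V$ for a sufficiently small neighborhood $V\subseteq U$ of $C_0$. The nontrivial inclusion requires every node of any $C'\in D_{d,n}$ near $C_0$ to lie in $\bigcup_k B_k$. Outside this union, $\nabla f_{C_0}$ does not vanish, so by continuity in the coefficients together with compactness of $\IP^2\setminus\bigcup_k B_k$, $\nabla f_{C'}$ is also nowhere zero there once $C'$ is close enough; alternatively one may invoke \propref{prop:5.28} for the two transversely intersecting curves $\{\partial f_{C_0}/\partial X=0\}$ and $\{\partial f_{C_0}/\partial Y=0\}$, whose transversality at each $p_k$ is exactly the Hessian condition. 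Within each $B_k$ the IFT yields exactly one critical point of $f_{C'}$, and it is a node precisely when it lies on $C'$; since $C'$ has exactly $n$ nodes in total, these IFT points account for all of them and provide the lift of $C'$ to $W$.

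Combining the three ingredients, $\pi|_W\colon W\to D_{d,n}\cap V$ is a bijective holomorphic embedding of a complex manifold of dimension $L_d-n$, so $\overline{D_{d,n}}$ is analytically smooth of dimension $L_d-n$ near $C_0$; the equivalence of analytic and algebraic smoothness then yields the algebraic claim. I expect the image identification to be the delicate point: a priori, the constructibility of $D_{d,n}$ leaves room for extraneous limit points in $\overline{D_{d,n}}\setminus D_{d,n}$ to sit arbitrarily close to $C_0$, and ruling them out locally is exactly what the analytic control of \propref{prop:5.28} (or the compactness argument above) is designed to supply.
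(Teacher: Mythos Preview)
Your approach is essentially the paper's: show that the projection $K_{d,n}\to D_{d,n}$ is an immersion (via the invertible Hessians) and then establish that nodes of nearby curves cannot escape small balls around the original nodes, so that the projection is a local analytic isomorphism; the paper phrases this last step as ``the inverse of $\tilde\pi$ is continuous'' and invokes \propref{prop:5.28} exactly as you suggest in your alternative.

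One small slip in your compactness argument: it is not true that $\nabla f_{C_0}$ is nowhere zero on $\IP^2\setminus\bigcup_k B_k$. A degree $d$ polynomial has $(d-1)^2$ critical points in $\IP^2$ (by B\'ezout), and only $n$ of them are nodes of $C_0$; the remaining critical points have $f_{C_0}\neq 0$ and typically lie outside $\bigcup_k B_k$. What \emph{is} true, and what you actually need, is that the triple $(f_{C_0},\partial_X f_{C_0},\partial_Y f_{C_0})$ has no common zero on the compact set $\IP^2\setminus\bigcup_k B_k$, since the singular locus of $C_0$ is exactly $\{p_1,\ldots,p_n\}$. Your compactness-plus-continuity argument then goes through verbatim for this triple and shows that $C'$ has no \emph{singular} point outside $\bigcup_k B_k$, which is all you require. (The paper's own invocation of \propref{prop:5.28} to the pair $\partial_X f,\partial_Y f$ is equally loose on this point, since that proposition assumes transversality at \emph{all} intersection points, not just at the nodes.) With this correction your argument is complete and matches the paper's.
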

\begin{proof}
We consider $\IP^{L_d}$ and $\IP^{L_d}\times(\IP^2)^n$ as Riemannian manifolds naturally. Now, $\tilde\pi:K_{d,n}/S_n\to\C\IP^{L_d}$ is bijective onto the image and smooth.

It is further an immersion. Ascribed by the matrix in the proof of \propref{prop:2.8}, we can see that after projection the tangent space of $K_{d,n}$ is mapped injectively -- $K_{d,n}$ is written locally as the variables in the kernel of the $F$ in \propref{prop:2.8}. Locally, the tangent maps isomorphically onto the tangent space of the locus of curves that passing through $p_1,...,p_n$.

The inverse is continuous. Pick any curve $C\in\C\IP^{L_d}$. Let $\partial_x C$ and $\partial_y C$ be the two partial derivatives, also curves, of the curve. They are smooth and intersect transversally at each node. For any curve $C'$ in a small neighborhood of $C$, its derivatives are also near to the original $\partial_x C$ and $\partial_y C$. The intersection of derivatives lies in a sufficient small neighborhood of the node due to \propref{prop:5.28}. However, there will not be more. Hence, the nodes will be around the original nodes. Of course, there can be less nodes. But in our case, the manifold represents only the curves with less nodes. Therefore, the number of nodes will not decrease. For any $\epsilon>0$, it is easy to see that there exists a neighborhood of $C$ in $D_{d,n}$ such that all curves lie in that neighborhood have nodes near to the nodes of $C$. Take the product metric on $\IP^{L_d}\times(\IP^2)^n$. Then the continuity is proved.\\
This means $\tilde\pi$ is an embedding, namely, the image $D_{d,n}$ is an embedding submanifold of $\C\IP^{L_d}$. As a subset of a (Zariksi) closed set $\overline{D_{d,n}}$ in the projective space $\IP^{L_d}$, each point in $D_{d,n}$ is smooth.
\end{proof}

\begin{prop}
\label{prop:5.31}
Let $f:X\to Y$ be a morphism of two regular quasi-projective schemes. If $\dim_x(X)=\dim_{f(x)}(Y)+\dim_x(X_y)$, then $f$ is flat at $x$.
\end{prop}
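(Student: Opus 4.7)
The plan is to reduce this to the classical \emph{miracle flatness} theorem of commutative algebra (Matsumura, \emph{Commutative Ring Theory}, Thm.~23.1). Since flatness is a local property, I would first pass from the morphism $f$ to the induced local homomorphism $A := \mathcal{O}_{Y,f(x)} \longrightarrow B := \mathcal{O}_{X,x}$. Both rings are regular (since $X$ and $Y$ are regular), hence in particular Cohen--Macaulay, and the hypothesis on dimensions becomes exactly
$$\dim B \;=\; \dim A + \dim\bigl(B/\mathfrak{m}_A B\bigr).$$
After this reduction the task is purely ring-theoretic: show that such a $B$ is flat over $A$.

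I would carry this out by induction on $n := \dim A$. The base case $n=0$ is trivial since $A$ is then a field. For the inductive step I pick $t \in \mathfrak{m}_A \setminus \mathfrak{m}_A^2$, which exists because $A$ is regular; then $A/tA$ is regular of dimension $n-1$. The decisive step is to verify that $t$ is a non-zero-divisor on $B$. On the one hand Krull's principal ideal theorem gives $\dim B/tB \geq \dim B - 1$; on the other, applying the dimension inequality to the local homomorphism $A/tA \to B/tB$ together with the assumed equality for $A \to B$ forces $\dim B/tB \leq \dim B - 1$. Hence $t$ is part of a system of parameters in $B$, and because $B$ is Cohen--Macaulay every system of parameters is a regular sequence, so $t$ is a non-zero-divisor on $B$. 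Consequently $B/tB$ is again Cohen--Macaulay, the dimension identity is preserved under reduction modulo $t$, and the induction hypothesis yields flatness of $B/tB$ over $A/tA$.

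The conclusion follows from the local criterion of flatness: $B$ is flat over $A$ if and only if $\operatorname{Tor}_1^A(A/tA, B) = 0$ and $B/tB$ is flat over $A/tA$. The vanishing of $\operatorname{Tor}_1$ is equivalent to $t$ being a non-zero-divisor on $B$, which we have just established, and the second condition is the inductive output. The main obstacle is exactly the non-zero-divisor step, since this is where the Cohen--Macaulay (hence regular) hypothesis on $X$ enters non-trivially; without it the dimension formula alone is not enough to guarantee flatness, as is visible already in the pinch-point examples where a non-flat finite morphism to a smooth base has constant fiber dimension but the source fails to be Cohen--Macaulay.
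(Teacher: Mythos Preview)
Your proof is correct and follows essentially the same route as the paper: reduce to the induced local homomorphism, then prove miracle flatness by induction on the dimension of the regular base ring, quotienting by an element $t\in\mathfrak m_A\setminus\mathfrak m_A^2$ and invoking the local criterion of flatness together with the Cohen--Macaulay property of the source to control dimensions. Your write-up is in fact cleaner than the paper's, which has some notational slips and leaves the non-zero-divisor/local-criterion step rather implicit in the final $\Tor$ isomorphism.
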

\begin{proof}
A quasi-projective scheme is of finite type, hence, Noetherian. Then each stalk is a Noetherian local ring. Take the stalk at $x,f(x)$. Set $R:=\SO_{X,x}$ and $S:=\SO_{Y,f(y)}$ with maximal ideals $\mathfrak{m}$ and $\mathfrak{n}$ respectively. Because the schemes are regular, then $R$ and $S$ are regular with $\dim(R)=\dim_x(X)$ and $\dim(S)=\dim_{f(x)}(Y)$. But we will weaken the assumption on $S$ to be a Cohen-Macaulay ring. And due to the condition, we have
$$\dim(R)=\dim(S)+\dim(R/\mathfrak{m}R).$$
To show is the induced morphism $f_{x}:S\to R$ is flat. We prove it by induction on dimension of $S$.

If $\dim(S)=0$, then $S$ is a field. Then it is trivial.

Assume the statement is true for any such $S$ whose dimension is less than $n$. Now, for the case where $\dim(S)=n+1$, take $v\in\mathfrak{m}\setmi\mathfrak{m}^2$. $A:=S/vS$ and $B:=R/vR$ are still Noetherian local rings with maximal ideals $\mathfrak{a}$ and $\mathfrak{b}$. Let $(s_1,...,s_{\dim{S}})$ be a system of parameter of $S$ and $(r_1,...,r_{\dim{R}})$ be a system of parameters of $R$. Apparently, $\dim(A)=\dim(S)-1$ with a system of parameter $(a_1,...,a_{\dim(A)})$. And the number of elements in the system of parameters $(b_1,...,b_{\dim(B)})$ of $B$ must be at least $\dim{R}-1$, i.e., $\dim(B)\geqslant\dim(R)-1$.  On the other, choose $c_1,...,c_s\in B$ so that $([c_1],...,[c_s])$ form a system of parameter of $B/\mathfrak{a}B$. For a sufficient large power, $\mathfrak{b}^k\subseteq\sum b_iB+\sum c_jB$. This means $\dim(B)\leqslant\dim(A)+\dim(B/\mathfrak{a}B)$. By the isomorphism theorem, $R/\mathfrak{m}R\cong (R/vR)/(\mathfrak{m}R/vR)\cong B/\mathfrak{a}B$. Hence,
$$\dim(B)\leqslant\dim(A)+\dim(B/\mathfrak{a}B)=\dim(A)+\dim(B/\mathfrak{a}B)=$$
$$\dim(S)-1+\dim(R/\mathfrak{m}R)=\dim(R)-1.$$
Put all together $\dim(B)=\dim(R)-1$. Now, we still have $A$ is regular and $B$ is Cohen-Macaulay with the same equality. Therefore, $B$ is flat over $A$. Hence, $\Tor^{S}_1(S/\mathfrak{m}S,R)\cong\Tor^{A}_1(S/\mathfrak{m}S,B)\cong0$. Then $R$ is flat over $S$. 
\end{proof}

\begin{thm}
$\pi:E_{d,n}\to \overline{B_{d,n}}$ is \'etale at each point in $K_{d,n}$.
\end{thm}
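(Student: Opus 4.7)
The plan is to apply \propref{prop:5.31} to deduce flatness of $\pi$ at $x \in K_{d,n}$ from a dimension equality between regular local rings, and to verify unramifiedness by inspecting the scheme-theoretic fiber directly.

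First I would check regularity at source and target. At $x = (C, p_1, \ldots, p_n) \in K_{d,n}$, the Jacobian computation in the proof of \propref{prop:2.8} shows that the $3n$ defining equations of $E_{d,n}$ cut out a smooth slice of codimension $3n$, so $E_{d,n}$ is regular at $x$ of dimension $L_d + 2n - 3n = L_d - n$. For $\overline{B_{d,n}}$ at $\pi(x) = C \in D_{d,n}$: in a sufficiently small neighborhood $U$ of $C$ in $\IP^{L_d}$, \propref{prop:5.28} applied to the transverse intersection $\partial_X C \cap \partial_Y C$ implies that every curve in $B_{d,n}$ meeting $U$ has exactly $n$ nodes near those of $C$ and no further singularities, i.e.\ $B_{d,n} \cap U = D_{d,n} \cap U$. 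Combined with \propref{prop:5.30}, which realises $D_{d,n}$ as a locally closed smooth submanifold, this gives $\overline{B_{d,n}} \cap U = D_{d,n} \cap U$ after possibly shrinking $U$, so $\overline{B_{d,n}}$ is regular at $C$ of dimension $L_d - n$.

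Next I would analyse the fiber at $x$. Near $x$ the fiber $\pi^{-1}(C)$ is cut out by $(\varphi_k, \psi_k, \theta_k)_{k=1}^n$ with $C$ frozen, so it decomposes locally as a product over $k$ of schemes near $p_k$. At each $p_k$, since the Hessian of $C$ is invertible, the system $(\psi_k, \theta_k)$ has non-vanishing Jacobian with respect to $(X_k, Y_k)$; hence $V(\varphi_k, \psi_k, \theta_k)$ is a reduced single point locally at $p_k$. Therefore the scheme-theoretic fiber of $\pi$ at $x$ is $\Spec \C$, which makes $\pi$ unramified at $x$. The dimension equality
\[
\dim_x E_{d,n} \;=\; L_d - n \;=\; (L_d - n) + 0 \;=\; \dim_{\pi(x)} \overline{B_{d,n}} + \dim_x \pi^{-1}(\pi(x))
\]
then lets me invoke \propref{prop:5.31} to conclude flatness of $\pi$ at $x$, and flat plus unramified gives \'etale at $x$.

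The main obstacle is the local identification $\overline{B_{d,n}} \cap U = D_{d,n} \cap U$ near $C$: this requires showing that no sequence of curves with strictly more than $n$ nodes (i.e.\ in $B_{d,n+1}$) can converge to $C$, which rests on \propref{prop:5.28} together with the embedded (not merely immersed) character of $D_{d,n}$ established in \propref{prop:5.30}, so that the Zariski closure $\overline{D_{d,n}}$ coincides with $D_{d,n}$ on a small Zariski neighborhood of $C$.
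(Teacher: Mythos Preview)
Your proposal is correct and follows essentially the same route as the paper: regularity of source and target via \propref{prop:2.8} and \propref{prop:5.30}, then the miracle flatness criterion \propref{prop:5.31}, then \'etaleness from flatness plus the fiber description. The paper's own proof is terser: it simply invokes \propref{prop:5.30} to assert that both $K_{d,n}$ and $D_{d,n}$ are smooth of the same dimension, applies \propref{prop:5.31} for flatness, and then quotes a characterisation from \cite{GRO67} (flat with fibers that are finite disjoint unions of $\Spec$ of separable field extensions) to conclude \'etaleness, without spelling out the fiber computation or the passage from $D_{d,n}$ to $\overline{B_{d,n}}$. Your write-up is in fact more careful on two points the paper glosses over: you explicitly justify why $\overline{B_{d,n}}$ (rather than merely $D_{d,n}$) is regular at $\pi(x)$ by arguing $\overline{B_{d,n}}\cap U = D_{d,n}\cap U$ via \propref{prop:5.28} and the embedded nature of $D_{d,n}$, and you verify unramifiedness by a direct Jacobian argument on the fiber instead of appealing to EGA.
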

\begin{proof}
It follows right away from a proposition in \cite{GRO67} P71 that a flat morphism $f:X\to Y$ satisfies that for all point $x\in X$ whose fiber at each point $f(x)=y\in Y$ is a disjoint union of finite separable extension of the residue field $\kappa(y)$. Due to \propref{prop:5.30} the dimension at each point $D_{d,n}$ is the same as the dimension of each point in $K_{d,n}$ and they are smooth. Apply \propref{prop:5.31}, we get the desired flatness. Hence, it is \'etale.
\end{proof}

\newpage

\appendix

\section{Cited Statements}
\begin{thm}
\label{thm:A.1}
\emph{(Rouch\'e)} Given $D_r(z_0)\subseteq\Omega\subseteq\C$ and $f,g:\Omega\to\C$ holomorphic, if $|g|<|f|$ on $\partial D_r(z_0)$ then the number of solutions counting multiplicities of $f$ and $f+g$ are the same.
\end{thm}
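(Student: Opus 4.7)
The plan is to deduce Rouch\'e from the argument principle via a homotopy. Recall that if $h$ is holomorphic on $\Omega$ and has no zeros on $\partial D_r(z_0)$, then the number $N(h)$ of zeros of $h$ in $D_r(z_0)$ counted with multiplicity is given by
\begin{equation*}
N(h)=\frac{1}{2\pi i}\oint_{\partial D_r(z_0)}\frac{h'(z)}{h(z)}\,dz.
\end{equation*}
So my first step would be to verify that the hypothesis $|g|<|f|$ on $\partial D_r(z_0)$ forces $f$, $f+g$, and more generally every $h_t:=f+tg$ for $t\in[0,1]$, to be non-vanishing on the boundary circle. Indeed, on $\partial D_r(z_0)$ one has $|h_t|\geqslant |f|-t|g|\geqslant |f|-|g|>0$, so the argument principle applies uniformly to the whole family $\{h_t\}_{t\in[0,1]}$.

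Next I would consider the function
\begin{equation*}
N(t):=\frac{1}{2\pi i}\oint_{\partial D_r(z_0)}\frac{h_t'(z)}{h_t(z)}\,dz,\qquad t\in[0,1].
\end{equation*}
By the step above, $N(t)$ equals the number of zeros of $h_t=f+tg$ in $D_r(z_0)$, hence is an integer for each $t$. I would then show that $N(t)$ is a continuous function of $t$: both numerator and denominator are jointly continuous in $(t,z)$ on the compact set $[0,1]\times\partial D_r(z_0)$, and the denominator is bounded away from zero there by the uniform estimate $|h_t|\geqslant|f|-|g|\geqslant\min_{\partial D_r(z_0)}(|f|-|g|)>0$. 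Hence the integrand is uniformly continuous in $t$, and one passes the limit under the integral sign. A continuous integer-valued function on $[0,1]$ is constant, so $N(0)=N(1)$, i.e.\ $N(f)=N(f+g)$.

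The only non-routine point is the uniform lower bound on $|h_t|$ along $\partial D_r(z_0)$, which is what makes the logarithmic derivative continuous in the parameter; this is precisely what the strict inequality $|g|<|f|$ guarantees via the compactness of $\partial D_r(z_0)$. As an alternative I would keep in mind the direct route: writing $\varphi:=1+g/f$, on the boundary one has $|\varphi-1|<1$, so $\varphi$ takes values in the open disc $D(1,1)\subseteq\{\Re w>0\}$, on which a holomorphic branch of $\log$ exists. Then
\begin{equation*}
N(f+g)-N(f)=\frac{1}{2\pi i}\oint_{\partial D_r(z_0)}\frac{\varphi'(z)}{\varphi(z)}\,dz=\frac{1}{2\pi i}\bigl[\log\varphi(z)\bigr]_{\partial D_r(z_0)}=0,
\end{equation*}
yielding the conclusion without invoking homotopy. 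I would use whichever version reads more crisply in the appendix.
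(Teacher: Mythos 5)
Your proof is correct: the uniform lower bound $|f+tg|\geqslant|f|-|g|>0$ on the compact boundary circle legitimately makes the argument-principle integral $N(t)$ a continuous integer-valued function of $t$, hence constant, and the alternative via a branch of $\log(1+g/f)$ on $D(1,1)$ is equally sound. Note, however, that the paper offers no proof to compare against: Theorem \ref{thm:A.1} sits in the appendix of cited statements and is simply quoted (from the complex-analysis literature, e.g.\ Stein--Shakarchi), so your argument is supplying the standard textbook proof rather than diverging from anything in the paper.
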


\begin{thm}
\label{thm:A.2}
\emph{(B\'ezout)} Let $C,D$ be two plane curves determined by two \emph{(}homogeneous\emph{)} polynomial equations of degree $c$ and $d$ respectively. Then they intersect at $cd$ points counting with multiplicities or transversally.
\end{thm}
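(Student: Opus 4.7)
The plan is to prove Bézout's theorem via a Hilbert polynomial / Euler characteristic computation on $\IP^2$. First, I would reduce to the case where $C$ and $D$ share no common irreducible component; if they do, the schematic intersection $C\cap D$ is positive-dimensional and the enumerative statement is vacuous. Under this coprimality hypothesis $C\cap D$ is a zero-dimensional closed subscheme of $\IP^2$, and the intersection count with multiplicities equals $\chi(\SO_{C\cap D})=\sum_{p\in C\cap D}\dim_{\C}\SO_{\IP^2,p}/(F,G)$, where $F,G$ are defining homogeneous polynomials of degrees $c,d$.

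Since $F$ and $G$ are coprime elements of the UFD $\C[X_0,X_1,X_2]$, they form a regular sequence, and the Koszul complex produces a locally free resolution of $\SO_{C\cap D}$:
\begin{equation*}
0\to\SO_{\IP^2}(-c-d)\xrightarrow{\,(G,\,-F)\,}\SO_{\IP^2}(-c)\oplus\SO_{\IP^2}(-d)\xrightarrow{\,(F,\,G)\,}\SO_{\IP^2}\to\SO_{C\cap D}\to 0.
\end{equation*}
Taking Euler characteristics, which are additive on exact sequences, and using the standard identity $\chi(\SO_{\IP^2}(-n))=\binom{2-n}{2}$ valid for all $n\in\Z$, I would compute
\begin{equation*}
\chi(\SO_{C\cap D})=\binom{2}{2}-\binom{2-c}{2}-\binom{2-d}{2}+\binom{2-c-d}{2}=cd
\end{equation*}
by a routine binomial simplification.

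For the transversal case, each intersection point $p$ has local ring $\SO_{\IP^2,p}/(F,G)$ of length one, so the geometric intersection consists of exactly $cd$ distinct points. For the general statement ``counted with multiplicities,'' the contribution at each $p$ is precisely $\dim_{\C}\SO_{\IP^2,p}/(F,G)$, which is the standard intersection multiplicity for a complete intersection of curves on the smooth surface $\IP^2$.

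The main obstacle is the exactness of the Koszul complex, which rests entirely on $F$ and $G$ forming a regular sequence in $\C[X_0,X_1,X_2]$; equivalently, on $F$ and $G$ being coprime, i.e., on $C$ and $D$ having no common irreducible component. Once this algebraic input is secured, the remaining steps are formal: additivity of $\chi$ across short exact sequences, the evaluation of $\chi(\SO_{\IP^2}(-n))$ via Serre's computation of the cohomology of line bundles on projective space, and the identification of $\chi(\SO_Z)$ with the total length of a zero-dimensional subscheme $Z\subset\IP^2$.
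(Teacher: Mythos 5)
Your argument is correct, but note that the paper itself does not prove Theorem \ref{thm:A.2} at all: it appears in Appendix A under the heading ``Cited Statements'' and is simply quoted as background (classically one would point to the resultant-based proof or to \cite{FUL98}). So there is no proof in the paper to compare against; what you have supplied is a standard, self-contained cohomological proof, and it goes through. The reduction to coprime defining polynomials is even automatic under the paper's conventions, since curves are declared irreducible and reduced, so two distinct curves never share a component. The Koszul resolution
\begin{equation*}
0\to\SO_{\IP^2}(-c-d)\to\SO_{\IP^2}(-c)\oplus\SO_{\IP^2}(-d)\to\SO_{\IP^2}\to\SO_{C\cap D}\to 0
\end{equation*}
is exact precisely because $F,G$ form a regular sequence in the Cohen--Macaulay ring $\C[X_0,X_1,X_2]$, and your Euler-characteristic computation
$\binom{2}{2}-\binom{2-c}{2}-\binom{2-d}{2}+\binom{2-c-d}{2}=cd$
checks out. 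Two small points worth making explicit if this were to be included: first, additivity of $\chi$ is applied to a four-term sequence, so one should split it into two short exact sequences (or invoke additivity for bounded exact complexes); second, the identification of $\dim_\C\SO_{\IP^2,p}/(F,G)$ with the intersection multiplicity used elsewhere in the paper (e.g.\ in Proposition \ref{prop:5.28}, where transversality gives multiplicity one) deserves a sentence, though it is the standard definition for a complete intersection on a smooth surface. Compared with the resultant proof, your route generalizes immediately to complete intersections in $\IP^n$ but requires Serre's computation of the cohomology of $\SO_{\IP^n}(m)$ as input; the resultant proof is more elementary but more painful to make rigorous about multiplicities.
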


\section{Notation}

\begin{multicols}{2}
$\N:=\{0,1,2,3,...\}$\\
$\Z^+:=\N^+:=\{1,2,3,4,...\}$\\
$\R^+:=\{r\in\R\mid r>0\}$\\
$\R^+_0:=\{r\in\R\mid r\geqslant0\}$\\
$R^{\times}$: group of units of $R$\\
$B_r(z):=\{x\mid d(z,x)<r\}$\\
$D_r(z):=\{x\mid d(z,x)\leqslant r\}$\\
$J_p(\mathbf{f})$: Jacobi matrix of $\mathbf{f}$ at $p$\\
$\imof{\bullet}$: image or preimage of a subset\\
\end{multicols}

\newpage

\bibliographystyle{amsalpha}

\begin{thebibliography}{}
\bibitem[ACG98]{ACG85} E. Arbarello \& M. Cornalba \& P. A. Griffiths \& J. Harris, \emph{Geometry of Algebraic Curves I}, Springer-Verlag Heidelberg, 2011
\bibitem[BES78]{BES78} A. L. Besse, \textit{Manifolds all of whose Geodesics are Closed}, \textit{Springer Verlag}, 1978
\bibitem[GRO67]{GRO67} A. Grothendieck, \textit{\'El\'ements de G\'eom\'etrie Alg\'ebrique IV}, \textit{Publication Math\'emaques de L'I.H.\'E.S.}, 1967
\bibitem[EIS95]{EIS95} D. Eisenbud, \emph{Commutative Algebra with a View toward Algebraic Geometry}, Springer-Verlag New York, 1995
\bibitem[FUL98]{FUL98} W. Fulton, \emph{Intersection Theory}, Springer-Verlag New York, 1998
\bibitem[HAR77]{HAR77} R. Hartshorne, \emph{Algebraic Geometry}, Springer-Verlag New York, 1977
\bibitem[HAR86]{HAR86} J. Harris, \emph{On the Severi Problem}, Inventiones Mathematicae, 1986
\bibitem[HAM98]{HAM98} J. Harris \& I. Morrison, \emph{Moduli of Curves}, Springer-Verlag New York, 1998
\bibitem[SEV21]{SEV21} F. Severi, \emph{Vorlesung \"uber Algebraische Geometrie}, Springer Fachmedien Wiesbaden GmbH, 1921
\bibitem[STA19]{STA19} Stack Project Authors, \emph{The Stack Project}, Cornell University, 2019
\bibitem[STS03]{STS03} E. M. Stein \& R. Shakarchi, \emph{Complex Analysis}, Princeton University Press, 2003
\end{thebibliography}
\providecommand{\bysame}{\leavevmode\hbox to3em{\hrulefill}\thinspace}
\newpage
\providecommand{\href}[2]{#2}

\vspace{25mm}
\hspace{50mm}
\begin{minipage}[]{100mm}
Xiao Yang\\[1mm]
ETHZ\\ [2mm]
{\tt yangx@student.ethz.ch} 
\end{minipage}

\end{document}